\newtheorem{theorem}{Theorem}[section]
\newtheorem{proposition}[theorem]{Proposition}
\newtheorem{corollary}[theorem]{Corollary}
\newtheorem{conjecture}[theorem]{Conjecture}
\theoremstyle{definition}
\newtheorem{definition}[theorem]{Definition}
\newtheorem{example}[theorem]{Example}
\newtheorem{remark}[theorem]{Remark}
\newcommand{\cI}{\mathcal{I}}
\newcommand{\rk}{\operatorname{rk}}
\newcommand{\cA}{\mathcal{A}}
\newcommand{\grRep}{\operatorname{grRep}}
\newcommand{\grVRep}{\operatorname{grVRep}}
\newcommand{\la}{\lambda}
\newcommand{\Rep}{\operatorname{Rep}}
\newcommand{\VRep}{\operatorname{VRep}}
\newcommand{\thag}{\text{thag}}
\newcommand{\cP}{\mathcal{P}}
\renewcommand{\cH}{\mathcal{H}}
\newcommand{\cK}{\mathcal{K}}
\newcommand{\cQ}{\mathcal{Q}}
\newcommand{\cT}{\mathcal{T}}
\newcommand{\ch}{\operatorname{ch}}
\begin{document}
\spacing{1.1}
\noindent{\Large\bf Kazhdan-Lusztig polynomials of matroids:\\ a survey of results and conjectures}\\

\noindent{\bf Katie Gedeon, Nicholas Proudfoot\footnote{Supported by NSF grant DMS-1565036.}, 
and Benjamin Young}\\
Department of Mathematics, University of Oregon,
Eugene, OR 97403\\

{\small
\begin{quote}
\noindent {\em Abstract.}
We report on various results, conjectures, and open problems related to Kazhdan-Lusztig polynomials of matroids.
We focus on conjectures about the roots of these polynomials, all of which appear here for the first time.
\end{quote} }

%\keywords{Kazhdan-Lusztig theory, matroids, % log concavity, 
%real rootedness, symmetric functions}

%==========================================================================
\section{Introduction}
The Kazhdan-Lusztig polynomial of a matroid, introduced in \cite{kl}, is in many ways analogous
to the classical Kazhdan-Lusztig polynomial associated with an interval in the Bruhat poset of a Coxeter group.
In both cases, there is a purely
combinatorial recursive definition.  In the classical setting, the polynomials have a geometric interpretation 
if the Coxeter group is a Weyl group:  they are intersection cohomology Poincar\'e polynomials of certain varieties,
or (equivalently) graded multiplicities of simple objects inside standard objects in a certain category of perverse
sheaves.  In particular, this implies that the coefficients are non-negative.  Non-negativity for arbitrary
Coxeter groups was conjectured by Kazhdan and Lusztig \cite{KL79}, but was only recently proven (35 years
later) by Elias and Williamson \cite{EW14}.

The story for matroids is similar, but still unresolved.  The analogue of a Weyl group is a realizable
matroid.  If a matroid is realizable, then its Kazhdan-Lusztig polynomial is the
intersection cohomology Poincar\'e polynomials of a certain variety, or (equivalently) the graded
multiplicity of a simple object inside of a standard object in a certain category of perverse sheaves. 
In particular, this implies that the coefficients are non-negative.  Non-negativity for arbitrary matroids
is still an open problem (Conjecture \ref{non-neg}).

Despite these analogies, there are important disparities between the two theories.  In the classical
setting, any polynomial with non-negative integer coefficients and constant term 1 arises as a Kazhdan-Lusztig
polynomial (even for the symmetric group $S_n$) \cite{Polo}.  In contrast, Kazhdan-Lusztig polynomials of matroids
appear to be very special.  Experimental evidence suggests that these polynomials are always log concave
and (even better) real rooted (Conjecture \ref{conjecture:real-rooted}).  Furthermore, two matroids that are related to each other 
by a contraction appear to have interlacing roots (Conjecture \ref{conjecture:interlacing} and Remark \ref{P-remark}).
Thus the theory of Kazhdan-Lusztig polynomials of matroids conjecturally contains surprisingly deep
structures that are not present in the classical theory.

We note that both classical Kazhdan-Lusztig polynomials and Kazhdan-Lusztig
polynomials of matroids are special cases of a more general definition introduced by Stanley \cite{Stanley89}
and further developed by Brenti \cite{brenti}.  The matroidal analogue of the $R$-polynomial is the characteristic
polynomial of an interval.  However, we stress that the various properties discussed in this paper, such as positivity
and real rootedness, are special to the case of matroids.  It would be interesting to investigate if there is a natural
level of generality in between ours and Stanley's in which these properties still hold.

Our goal in this paper is to give results and conjectures for arbitrary matroids as well as specific families of examples.  
We will also discuss equivariant Kazhdan-Lusztig polynomials,
introduced in \cite{ekl}, which are finer invariants of matroids with symmetries in which the integer (conjecturally non-negative)
coefficients of the polynomial are replaced by virtual (conjecturally honest) representations of the symmetry group.
In the case of uniform matroids, thagomizer matroids,
and braid matroids, one has an action of the symmetric group, and the coefficients of the equivariant 
Kazhdan-Lusztig polynomial are best understood as (Schur positive) symmetric functions.

\vspace{\baselineskip}
\noindent
{\em Acknowledgments:}
The authors are grateful to Nima Amini, June Huh, Steven Sam, David Speyer, and John Wiltshire-Gordon
for helpful conversations.  All computer calculations were done in SAGE \cite{sage}. 

\section{Definition and positivity}
Let $M$ be a matroid on a finite ground set $\cI$, and let $L(M)$ denote the lattice of flats of $M$, with minimum element $\emptyset$.  
Let $\mu$ be the M\"obius function on $L(M)$,
and let $$\chi_M(t) := \sum_{F\in L(M)} \mu(\emptyset,F)\, t^{\rk M - \rk F}$$ 
be the {\bf characteristic polynomial} of $M$.
For any flat $F\in L(M)$, let $\cI^F = \cI\smallsetminus F$ and $\cI_F = F$.  Let $M^F$ be the matroid
on $\cI^F$ consisting of subsets of $\cI^F$ whose union with a basis for $F$ are independent in $M$, 
and let $M_F$ be the matroid on $\cI_F$ consisting of subsets of $\cI_F$ which are independent in $M$.
We call the matroid $M^F$ the {\bf restriction} of $M$ at $F$, and $M_F$ the {\bf localization}
of $M$ at $F$. This terminology and notation comes from the corresponding
constructions for hyperplane arrangements; the matroid $M^F$ is also known as a {\bf contraction}.
We have $\rk M^F = \rk M - \rk F$ and $\rk M_F = \rk F$.  The lattice of flats of $M^F$ is isomorphic to the portion
of $L(M)$ lying above $F$, while the lattice of flats of $M_F$ is isomorphic to the portion of $L(M)$ lying below $F$
(see Figure \ref{fig:lattice}).

\begin{figure}[h]
\begin{center}
\includegraphics{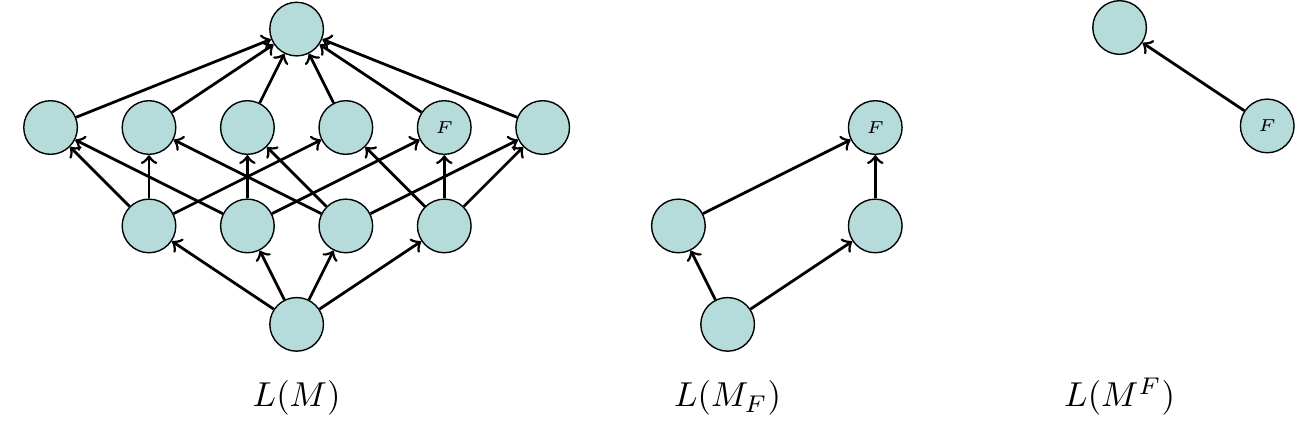}
 \caption{Localization and restriction at a flat of a matroid $M$.}
   \label{fig:lattice}
   \end{center}
\end{figure}

The following theorem is proven in~\cite[Theorem 2.2]{kl}; it is essentially equivalent to the statement that the characteristic
polynomial is a $P$-kernel in the sense of \cite{brenti}.
\pagebreak

\begin{theorem}\label{thm:KL-exists}
There is a unique way to assign to each matroid $M$ a polynomial $P_M(t)\in\mathbb{Z}[t]$ 
such that the following conditions are satisfied:
\begin{enumerate}
\item If $\rk M = 0$, then $P_M(t) = 1$.
\item If $\rk M > 0$, then $\deg P_M(t) < \tfrac{1}{2}\rk M$.
\item For every $M$, $\displaystyle t^{\rk M} P_M(t^{-1}) = \sum_{F}\chi_{M_F}(t) P_{M^F}(t).$
\end{enumerate}
The polynomial $P_M(t)$ is called the {\bf Kazhdan-Lusztig polynomial} of $M$.
\end{theorem}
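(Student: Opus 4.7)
I would prove uniqueness first, then existence, both by induction on $n := \rk M$. For uniqueness, the base case $n=0$ is dictated by (1). For the inductive step, suppose $P_M$ and $P'_M$ both satisfy (1)--(3); by induction the polynomials $P_{M^F}$ agree for all $F > \emptyset$, since $\rk M^F < n$. Subtracting the two instances of (3) (noting that the $F = \emptyset$ term contributes $\chi_{M_\emptyset}(t)P_M(t) = P_M(t)$), the difference $D := P_M - P'_M$ satisfies
$$t^{n} D(t^{-1}) = D(t).$$
But $\deg D < n/2$, so if $D \neq 0$, then the minimal degree of $t^n D(t^{-1})$ is $n - \deg D > n/2 > \deg D$, a contradiction. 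Hence $D=0$.

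For existence, I would again induct on $n$. The case $n=0$ sets $P_M = 1$. For $n > 0$, set
$$Q_M(t) := \sum_{F > \emptyset} \chi_{M_F}(t)\, P_{M^F}(t),$$
which is defined by the induction hypothesis since $\rk M^F < n$ whenever $F > \emptyset$. Condition (3) then becomes the requirement that $P_M \in \mathbb{Z}[t]$ satisfy
$$t^{n} P_M(t^{-1}) - P_M(t) = Q_M(t). \qquad (\ast)$$
Since $\deg P_M < n/2$, the left-hand side of $(\ast)$ is manifestly \emph{anti-palindromic of degree $n$}: applying $t \mapsto t^{-1}$ and multiplying by $t^n$ negates it. In particular, such a $P_M$ exists (and is determined by $c_i := -[t^i]Q_M$ for $i < n/2$) if and only if $Q_M$ enjoys the same anti-palindromy:
$$t^{n} Q_M(t^{-1}) = -Q_M(t). \qquad (\dagger)$$

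The main obstacle is establishing $(\dagger)$. The plan is to expand $t^{n} Q_M(t^{-1})$ by applying the inductive version of (3) inside $M^F$. Using the natural isomorphisms $(M^F)^{G/F} \cong M^G$ and $(M^F)_{G/F} \cong (M_G)^F$ for flats $F \le G$, one obtains
$$t^{n-\rk F} P_{M^F}(t^{-1}) = \sum_{G \ge F} \chi_{(M_G)^F}(t)\, P_{M^G}(t),$$
so that after multiplying by $t^{\rk F}\chi_{M_F}(t^{-1})$ and summing over $F > \emptyset$, one may reindex the double sum by the outer flat $G$. The identity $(\dagger)$ then reduces to showing that for every flat $G > \emptyset$,
$$\sum_{\emptyset \le F \le G} t^{\rk F}\,\chi_{M_F}(t^{-1})\,\chi_{(M_G)^F}(t) \;=\; 0.$$
This is the statement that the characteristic polynomial $\chi_M$ is a $P$-kernel in the sense of Brenti; it is proved by unfolding the definition $\chi_{M'}(t) = \sum_{H} \mu(\hat 0, H) t^{\rk M' - \rk H}$ on both factors and applying the Möbius relation $\sum_{\emptyset \le H \le G} \mu(\emptyset, H)\mu(H, G) = \delta_{\emptyset, G}$ in the lattice $L(M_G)$, which collapses the inner sum.

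Once $(\dagger)$ is in hand, define $P_M(t) := -\sum_{i=0}^{\lceil n/2\rceil -1}\bigl([t^i]Q_M\bigr)\,t^i$; anti-palindromy of $Q_M$ forces its middle coefficient to vanish when $n$ is even and ensures $(\ast)$ holds. Integrality of the coefficients is automatic since $Q_M \in \mathbb{Z}[t]$, and $\deg P_M < n/2$ by construction. This produces a $P_M$ satisfying (1)--(3), completing the induction.
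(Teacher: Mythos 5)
Your overall strategy is sound and is essentially the route the paper points to: uniqueness via the degree bound, existence by reducing condition (3) to the anti-palindromy of $Q_M(t)=\sum_{F>\emptyset}\chi_{M_F}(t)P_{M^F}(t)$, which in turn reduces (via the isomorphisms $(M^F)^{G/F}\cong M^G$ and $(M^F)_{G/F}\cong (M_G)^F$ and the reindexing by the outer flat $G$) to the statement that the characteristic polynomial is a $P$-kernel, i.e.\ that for every flat $G>\emptyset$,
$$\sum_{\emptyset\le F\le G}t^{\rk F}\chi_{M_F}(t^{-1})\,\chi_{(M_G)^F}(t)=0.$$
This is exactly the content of the proof in \cite[Theorem 2.2]{kl} that the survey cites, and your uniqueness argument, the equivalence of $(\ast)$ with anti-palindromy of $Q_M$, and the reindexing of the double sum are all correct.

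However, the last step of your argument for the key identity fails as written: the relation $\sum_{\emptyset\le H\le G}\mu(\emptyset,H)\mu(H,G)=\delta_{\emptyset,G}$ is false. That sum is the convolution $\mu*\mu$, not $\mu*\zeta$; already for a rank-one lattice it equals $-2$, and for the Boolean lattice of rank two it equals $4$. The correct collapse is organized differently. Expanding both factors, using $t^{\rk F}\chi_{M_F}(t^{-1})=\sum_{H\le F}\mu(\emptyset,H)\,t^{\rk H}$ and $\chi_{(M_G)^F}(t)=\sum_{F\le K\le G}\mu(F,K)\,t^{\rk G-\rk K}$, turns the left-hand side into
$$\sum_{H\le F\le K\le G}\mu(\emptyset,H)\,\mu(F,K)\,t^{\rk H+\rk G-\rk K},$$
and one first sums over the \emph{middle} flat $F\in[H,K]$, using the defining recursion of the M\"obius function in the form $\sum_{H\le F\le K}\mu(F,K)=\delta_{H,K}$; this leaves $t^{\rk G}\sum_{H\le G}\mu(\emptyset,H)$, which vanishes for $G>\emptyset$ by a second application of the M\"obius recursion. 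With this replacement the lemma, and hence your induction and the whole proof, goes through.
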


\begin{conjecture}
\label{non-neg}
The coefficients of $P_M(t)$ are non-negative.
\end{conjecture}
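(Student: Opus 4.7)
The plan is to mimic the strategy that settles the realizable case: construct, for an arbitrary matroid $M$, a graded vector space $\cH(M)$ whose Hilbert series equals $P_M(t)$, so that non-negativity is automatic from $\dim\cH^i(M)\geq 0$. This parallels the work of Elias and Williamson \cite{EW14} for classical Kazhdan-Lusztig polynomials, where Soergel bimodules replaced geometric intersection cohomology in the non-Weyl setting, and the work of Adiprasito, Huh, and Katz, where Chow rings of matroids stood in for cohomology rings of nonexistent varieties.

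More concretely, I would first try to define $\cH(M)$ as a graded module over a combinatorial algebra attached to $M$ --- a natural candidate is the Chow ring of $M$, or a suitable augmented variant thereof --- designed so that when $M$ is realizable, $\cH(M)$ recovers the intersection cohomology appearing in the geometric description from \cite{kl}. Next, I would verify that $\cH(M)$ obeys the recursion in Theorem~\ref{thm:KL-exists}. The $t^{\rk M}P_M(t^{-1})$ on the left should come from Poincar\'e duality on an auxiliary symmetrically graded object into which $\cH(M)$ embeds, and the sum on the right from a direct-sum decomposition in which the summand indexed by a flat $F$ is isomorphic to $\cH(M^F)$ tensored with a graded vector space of Hilbert series $\chi_{M_F}(t)$. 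The degree bound $\deg P_M<\tfrac{1}{2}\rk M$ should then fall out of the same framework, as the statement that $\cH(M)$ occupies only the strictly-lower-half of that symmetrically graded ambient object.

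The main obstacle is the combinatorial decomposition just described. In the realizable case it is supplied by the Beilinson-Bernstein-Deligne decomposition theorem for perverse sheaves, a deep geometric statement with no direct combinatorial shadow. To obtain one purely combinatorially, one likely needs a full Hodge-theoretic package --- Poincar\'e duality, hard Lefschetz, and Hodge-Riemann bilinear relations --- for whatever algebra plays the role of the Chow ring here, together with an induction that propagates these properties along the operations of restriction and localization. Such a bundled induction is precisely the device used by Elias-Williamson \cite{EW14} and by Adiprasito-Huh-Katz; the expected difficulty is to find the right collection of mutually reinforcing statements about $\cH(M)$ so that the entire package survives a single step of the recursion in Theorem~\ref{thm:KL-exists}, even though no individual statement in the package is strong enough to self-propagate in isolation.
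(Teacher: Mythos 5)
There is a genuine gap here, and it is worth being precise about what kind of gap it is. The statement you were asked to prove is stated in the paper as Conjecture~\ref{non-neg}: the paper itself offers no proof for arbitrary matroids, and proves only the realizable case (Theorem~\ref{reciprocal}), where $P_M(t)$ is identified with the intersection cohomology Poincar\'e polynomial of the reciprocal plane $X_\cA$, with the decomposition theorem for perverse sheaves doing the real work. Your text is an outline of a research program to remove the realizability hypothesis, not a proof: at no point do you actually construct the graded vector space $\cH(M)$, verify that its Hilbert series satisfies the recursion of Theorem~\ref{thm:KL-exists}, or establish the Poincar\'e duality, hard Lefschetz, and Hodge--Riemann statements that you correctly identify as the engine of the argument. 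Every step that would constitute the mathematical content --- the definition of the ambient symmetrically graded object, the flat-indexed direct-sum decomposition with factors of Hilbert series $\chi_{M_F}(t)$, and the inductive package that propagates through restriction and localization --- is named as a desideratum rather than carried out. Since no individual claim is verified, the proposal cannot be assessed as correct or incorrect as a proof; it simply is not one.

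One concrete point where the sketch, taken literally, would fail: the Chow ring of $M$ in the sense of Adiprasito--Huh--Katz \cite{AHK} is not a viable candidate for $\cH(M)$ as stated. Its Hilbert series is a symmetric polynomial of degree $\rk M-1$ governed by the (reduced) characteristic polynomial, not by $P_M(t)$, and it does not recover the intersection cohomology of the reciprocal plane in the realizable case; so ``the Chow ring or a suitable augmented variant'' conceals precisely the hard step, namely building a strictly larger graded object together with a distinguished submodule playing the role of $\operatorname{IH}(X_\cA)$, and proving a purely combinatorial substitute for the decomposition theorem that splits off the summands $\cH(M^F)$. That substitute is exactly what the paper lacks and what your plan presupposes. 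To be fair, the strategy you describe is the natural one and is the direction this problem was eventually resolved in (via an augmented Chow ring and a bundled Hodge-theoretic induction), but as submitted it establishes nothing beyond what the paper already contains in Theorem~\ref{reciprocal} and Remark~\ref{eq-re}.
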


\begin{theorem}
\label{reciprocal}
Conjecture~\ref{non-neg} holds when $M$ is realizable over some field.
\end{theorem}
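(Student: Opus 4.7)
The plan is to interpret $P_M(t)$ as the intersection cohomology Poincar\'e polynomial of a projective variety attached to a realization of $M$, so that non-negativity of the coefficients becomes automatic. Given a realization of $M$ over $k$, say by vectors spanning $k^{\rk M}$, I would associate the \emph{matroid Schubert variety} $Y_M$: the closure, in a product of $|\cI|$ projective lines, of the affine span of the arrangement. This projective variety is stratified by the flats of $M$, with open stratum equal to the complement of the hyperplane arrangement in $k^{\rk M}$; \'etale-locally near the stratum labeled by $F$, it looks like the product of the arrangement complement of $M_F$ with the matroid Schubert variety $Y_{M^F}$.

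Let $f_M(t) := \sum_i \dim IH^{2i}(Y_M;\mathbb{Q})\, t^{i}$. By Theorem~\ref{thm:KL-exists}, to conclude $f_M = P_M$ it suffices to verify the three defining properties for $f_M$. Condition (1) is immediate, since $Y_M$ is a point when $\rk M = 0$. Condition (2), the strict degree bound $\deg f_M < \tfrac12 \rk M$, would follow from parity vanishing together with a small-map property: one argues that a natural resolution (for instance a wonderful model in the sense of De Concini--Procesi) is semismall, so $IH^{\geq \rk M}(Y_M) = 0$, and separately verifies that only even-degree $IH$ occurs (via a paving by affines coming from a Bia{\l}ynicki--Birula decomposition for a generic $k^{*}$-action, or working $\ell$-adically with Frobenius weights in positive characteristic).

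Condition (3), the recursive identity, is the heart of the proof. The left-hand side $t^{\rk M} f_M(t^{-1})$ is the Poincar\'e polynomial of $Y_M$ read via Poincar\'e duality for $IH$. The right-hand side should emerge from the stratification: for each flat $F$, the contribution of the corresponding stratum factors as (Poincar\'e polynomial of the open stratum of $Y_{M_F}$) times (Poincar\'e polynomial of the transverse slice, which is modeled on $Y_{M^F}$), and the first factor is exactly $\chi_{M_F}(t)$ by the Orlik--Solomon theorem. Assembling the strata via the decomposition theorem for the resolution $\tilde Y_M \to Y_M$ yields the identity.

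The main obstacle is the careful execution of Condition (3). One needs to (i) establish the \'etale-local product structure at each stratum, (ii) control how $IH(Y_M)$ decomposes over the stratification by flats through the decomposition theorem, and (iii) rule out odd-degree $IH$ so that no cancellation can occur on either side. Granting these geometric inputs, $f_M$ satisfies the three axioms, hence equals $P_M$, and its coefficients $\dim IH^{2i}(Y_M;\mathbb{Q})$ are visibly non-negative. These inputs are exactly what make the realizable case tractable, and are presumably the content of the original argument cited in~\cite{kl}.
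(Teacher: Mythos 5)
Your overall strategy---realize $P_M(t)$ as an intersection cohomology Poincar\'e polynomial of a variety attached to a realization, so that non-negativity is automatic---is the right one and matches the paper in spirit, but the specific identification you propose is wrong, and the error breaks two of the three axioms you need. The \emph{global} intersection cohomology of the projective matroid Schubert variety $Y_M$ does not compute $P_M(t)$: it computes $Z_M(t)=\sum_F t^{\rk F}P_{M^F}(t)$, which is palindromic of degree $\rk M$. In particular $IH^{2\rk M}(Y_M)\neq 0$ by Poincar\'e duality (since $\dim Y_M=\rk M$), so your claimed vanishing $IH^{\geq \rk M}(Y_M)=0$ is false; semismallness of a wonderful resolution cannot override Poincar\'e duality for a projective variety, so your verification of axiom (2) collapses. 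The invariant that does satisfy the degree bound is the \emph{local} intersection cohomology: the stalk of the $IC$ sheaf of $Y_M$ at the most degenerate point, or equivalently the intersection cohomology of the affine reciprocal plane $X_\cA=\operatorname{Spec} R_\cA$ of the Orlik--Terao algebra, which is the object the paper actually uses. There the bound $\deg<\tfrac12\rk M$ is precisely the support condition in the definition of the $IC$ sheaf, not something to be extracted from a resolution.

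Relatedly, your treatment of axiom (3) conflates these two objects: if $f_M$ were the global $IH$ of the projective $Y_M$, Poincar\'e duality would force $t^{\rk M}f_M(t^{-1})=f_M(t)$, which is not the right-hand side of axiom (3). Moreover the stratum indexed by $F$ does not contribute ``exactly $\chi_{M_F}(t)$ by Orlik--Solomon'': over $\mathbb{C}$ the Poincar\'e polynomial of an arrangement complement has positive coefficients $|\mu(\emptyset,F)|$, whereas $\chi_{M_F}(t)$ has alternating signs. To make the characteristic polynomial itself appear one counts $\mathbb{F}_q$-points, i.e.\ keeps track of Frobenius weights; this is exactly why the paper's proof of Theorem~\ref{reciprocal} works with $\ell$-adic \'etale intersection cohomology of the reciprocal plane over a finite field (legitimate because a matroid realizable over some field is realizable over a finite field). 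With the corrected object---the stalk/local $IH$, equivalently $IH(X_\cA)$---and the point-counting formalism, your plan of verifying the three axioms of Theorem~\ref{thm:KL-exists} does go through; as written, it does not.
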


\begin{remark}
Theorem~\ref{reciprocal} is proved in \cite[Corollary 3.11]{kl}; the idea of the proof is as follows.  Suppose that $M$ is the matroid associated
with a finite collection $\cA$ of vectors in a vector space $V$.  Let $R_\cA$ be the subring
of rational functions on the dual space $V^*$ generated by the reciprocals of the nonzero elements of $\cA$.
The ring $R_\cA$ is called the {\bf Orlik-Terao algebra} of $\cA$, and its prime spectrum $X_\cA := \operatorname{Spec} R_\cA$
is called the {\bf reciprocal plane} of $\cA$.  One can show that the Kazhdan-Lusztig polynomial of $M$
is equal to the intersection cohomology Poincar\'e polynomial of $X_\cA$, and is therefore non-negative.  
The proof works with $\ell$-adic \'etale cohomology
of varieties defined over finite fields; since any matroid that is realizable over some field is realizable over a finite field,
this argument covers all realizable matroids.
\end{remark}

We now briefly survey what is known about the individual coefficients of $P_M(t)$; see \cite[Section 2.3]{kl} for references.
It is easy to prove that the constant term of $P_M(t)$ is always equal to 1.  The linear term of $P_M(t)$ is equal to the number
of coatoms of $L(M)$ minus the number of atoms, which is always non-negative by the hyperplane theorem.
Furthermore, one can show that this number is equal to zero if and only if $L(M)$ is modular, in which case 
{\em all} of the coefficients of positive powers of $t$ vanish.
This is the first piece of evidence that Kazhdan-Lusztig polynomials of matroids form a much more restrictive class
than classical Kazhdan-Lusztig polynomials.  One can also write down explicit general formulas for the quadratic and cubic terms,
but neither one is manifestly positive.  More recently, Wakefield wrote down a general combinatorial formula
for every coefficient \cite[Theorem 5.4]{wakefield}, though again this formula is not manifestly positive.

By definition, the degree of $P_M(t)$ is bounded above by $\lfloor \frac{\rk M - 1}{2}\rfloor$ if $M$ has positive rank, but this bound is not always
achieved.  For example, as we noted above, the degree is zero whenever $L(M)$ is modular.  If $M$ is the direct sum of
two smaller matroids $M_1$ and $M_2$, then $P_M(t) = P_{M_1}(t)P_{M_2}(t)$, which again results in the degree of $P_M(t)$
having smaller than expected degree (unless $\rk M_1$ and $\rk M_2$ are both odd).  We call a matroid $M$ {\bf non-degenerate}
if $\rk M = 0$ or $P_M(t)$ has degree $\lfloor \frac{\rk M - 1}{2}\rfloor$.
A matroid is {\bf regular} if it is realizable over every field.

\begin{conjecture}
\label{degree}
Every connected regular matroid is non-degenerate.
\end{conjecture}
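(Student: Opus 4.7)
The plan is to combine the geometric interpretation from Theorem \ref{reciprocal} with an inductive analysis of the recursion in Theorem \ref{thm:KL-exists}(3). Since a regular matroid is in particular realizable, any realization $\cA$ gives $P_M(t) = IP(X_\cA, t)$, the intersection cohomology Poincar\'e polynomial of the reciprocal plane. Writing $r = \rk M$ and $d = \lfloor (r-1)/2 \rfloor$, non-degeneracy amounts to non-vanishing of the leading coefficient $a_d$ of $P_M(t)$, equivalently $IH^{2d}(X_\cA) \neq 0$.

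The first step would be to induct on $r$. Extracting the coefficient of $t^{r-d}$ from both sides of the recursion gives
$$a_d \;=\; \sum_{F\in L(M)} \bigl[t^{r-d}\bigr]\bigl(\chi_{M_F}(t)\, P_{M^F}(t)\bigr).$$
Since $\chi_{M_F}(t)$ has degree exactly $\rk F$ and, by induction, $P_{M^F}(t)$ has controlled degree on the connected components of $M^F$, only flats $F$ meeting a specific rank condition contribute, and their contributions can be written down explicitly. The second step is to prove that this sum is strictly positive. Two complementary approaches are natural: combinatorially, one could try to rewrite the expression via M\"obius inversion on $L(M)$ and identify a dominant contribution coming from the top flat (whose term is essentially a Whitney number of $M$); geometrically, one could try to exhibit a nonzero class in $IH^{2d}(X_\cA)$ directly, for example by analyzing a semi-small resolution of the reciprocal plane and applying the decomposition theorem.

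The connectivity hypothesis enters to rule out the obvious obstruction: if $M = M_1 \oplus M_2$ with positive-rank summands, then $P_M = P_{M_1}P_{M_2}$, which typically forces $\deg P_M < \lfloor (r-1)/2\rfloor$. Regularity enters through realizability (used above) and, presumably, through total unimodularity of a realization to control how the contractions $M^F$ decompose into connected regular pieces to which induction applies; without this, one cannot cleanly invoke the inductive hypothesis on the $P_{M^F}$.

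The main obstacle I expect is ruling out cancellation in the recursive sum: the characteristic polynomials $\chi_{M_F}(t)$ have coefficients of alternating sign, so positivity of the $P_{M^F}$ does not immediately give positivity of $a_d$. This cancellation issue, together with the question of why regularity rather than mere realizability should be the right hypothesis, is what I expect will ultimately require genuine geometric input on $X_\cA$ beyond the identification $P_M(t) = IP(X_\cA, t)$, rather than a purely combinatorial argument from the recursion alone.
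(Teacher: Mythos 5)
There is a fundamental mismatch here: the statement you are addressing is Conjecture \ref{degree}, which the paper states as an \emph{open problem} and does not prove; there is no proof in the paper to compare against. More importantly, your proposal is not a proof either --- it is a strategy outline whose decisive step is explicitly left unresolved. Everything up to the point where you must show the leading coefficient $a_d$ (equivalently $IH^{2d}(X_\cA)\neq 0$ for $d=\lfloor (r-1)/2\rfloor$) is strictly positive is either a restatement of known facts (realizability of regular matroids, $P_M(t)=IP(X_\cA,t)$ from Theorem \ref{reciprocal}, the recursion of Theorem \ref{thm:KL-exists}) or a vague hope (``identify a dominant contribution,'' ``exhibit a nonzero class \dots by analyzing a semi-small resolution''). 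The cancellation problem you flag at the end --- that the $\chi_{M_F}(t)$ have alternating-sign coefficients, so positivity of the $P_{M^F}(t)$ does not control the sign of the extracted coefficient --- is precisely the content of the conjecture, not a technical obstacle to be cleaned up later. As written, the argument proves nothing beyond what the paper already records.

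Two further points on the substance of your plan. First, your proposed role for regularity (total unimodularity ``to control how the contractions $M^F$ decompose'') does not engage with why regularity, rather than realizability plus connectivity, should be the right hypothesis: there are connected realizable matroids that are degenerate --- for instance, the rank-$3$ matroid of a finite projective plane is connected and realizable, but its lattice of flats is modular, so $P_M(t)=1$ has degree $0$ rather than $1$. Any genuine proof must see the difference between such matroids and regular ones, and nothing in your inductive setup does. Second, your inductive extraction of the coefficient of $t^{r-d}$ from condition (3) of Theorem \ref{thm:KL-exists} is fine as bookkeeping, but the induction has no base of positivity to propagate: knowing $\deg P_{M^F}$ for connected regular contractions does not determine the sign of the resulting alternating sum, so the induction never closes. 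If you want to pursue this conjecture, the geometric route you mention (producing an explicit nonzero class in top-degree intersection cohomology of the reciprocal plane, or a suitable decomposition-theorem argument) would need to be carried out in detail; at present it is only named, not executed.
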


\begin{remark}
A graphical matroid is regular, and it is connected if and only if the corresponding graph is 2-connected.
Conjecture \ref{degree} is already interesting in the graphical case.
\end{remark}

\section{The roots of the Kazhdan-Lusztig polynomial}
For any matroid $M$, Adiprasito, Huh, and Katz recently proved that the absolute values of the 
coefficients of the characteristic polynomial form a log concave sequence with no internal zeros \cite{AHK}.
Experimental evidence led us to make the same conjecture for the Kazhdan-Lusztig polynomial
\cite[Conjecture 2.5]{kl}.

\begin{conjecture}\label{conjecture:log concave}
For every matroid $M$, the coefficients of $P_M(t)$ form a log concave sequence with no internal zeros.  
\end{conjecture}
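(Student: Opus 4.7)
The plan is to adapt the Hodge-theoretic strategy of Adiprasito, Huh, and Katz, who deduced log concavity of the characteristic polynomial from the Kähler package (Poincaré duality, Hard Lefschetz, and the Hodge-Riemann bilinear relations) on the Chow ring of $M$. The goal would be to construct, for every matroid $M$, a graded module $\cH(M)$ over a suitable combinatorial algebra $A(M)$ whose Hilbert series equals $P_M(t)$, and then prove the analogous Kähler package for the pair $(A(M),\cH(M))$. The Hodge-Riemann relations on the primitive part of $\cH(M)$ then force the sequence of dimensions—i.e., the coefficients of $P_M(t)$—to be log concave with no internal zeros, by the same Hessian signature argument used in \cite{AHK}.

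First I would define $\cH(M)$ recursively so that its Hilbert series satisfies the defining recursion of Theorem \ref{thm:KL-exists}. In the realizable case, this should recover the intersection cohomology of the reciprocal plane $X_\cA$, with $A(M)$ playing the role of the cohomology of a small resolution; a natural candidate is an augmented Chow-type ring in the spirit of Feichtner-Yuzvinsky. The recursion should have the shape of a combinatorial decomposition theorem, writing $\cH(M)$ as an orthogonal sum indexed by flats $F$ of pieces built from $A(M_F)$ and $\cH(M^F)$, directly mirroring condition (3) of Theorem \ref{thm:KL-exists}.

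Next I would verify Poincaré duality for $\cH(M)$ against a natural symmetric bilinear form, and then—the hard part—establish Hard Lefschetz and Hodge-Riemann for $(A(M),\cH(M))$ by simultaneous induction on $\rk M$, carrying along the analogous properties for the Chow rings of all localizations $M_F$ and for the modules $\cH(M^F)$ of all restrictions. The inductive step must show that the proposed orthogonal decomposition is compatible with a Lefschetz operator and that the signatures of the Hodge-Riemann forms combine correctly across summands.

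The main obstacle is this inductive step. Unlike the AHK setting, in which the Chow ring admits a recursion from a single stellar subdivision, the KL recursion mixes contributions from all flats simultaneously, so the module structure must be flexible enough to permit a global orthogonal decomposition analogous to the geometric decomposition theorem while rigid enough that the Hodge-Riemann form remains definite on each primitive piece. I expect that developing this combinatorial decomposition theorem—and proving its compatibility with a Lefschetz element—will require substantial new input, plausibly modeled on the Soergel-bimodule arguments that Elias and Williamson \cite{EW14} used to prove positivity of classical Kazhdan-Lusztig polynomials.
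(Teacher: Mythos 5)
The statement you are trying to prove is Conjecture \ref{conjecture:log concave}, which the paper does not prove: it is presented as an open conjecture supported only by experimental evidence (and it is strictly weaker than the also-open real-rootedness Conjecture \ref{conjecture:real-rooted}). So there is no proof in the paper to compare against, and your text is a research program rather than a proof; the question is whether the program is sound.

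It has a concrete gap at the final step. You claim that once a graded module $\mathcal{H}(M)$ with Hilbert series $P_M(t)$ satisfies Poincar\'e duality, Hard Lefschetz, and Hodge-Riemann, the coefficients of $P_M(t)$ are forced to be log concave ``by the same Hessian signature argument used in \cite{AHK}.'' That is not how the argument in \cite{AHK} works, and the implication is false as stated: the log concave sequence there is not the Hilbert function of the Chow ring (which is palindromic and has nothing to do with the characteristic polynomial), but the sequence of degrees $\deg(\alpha^{k}\beta^{r-k})$ of products of two distinguished nef classes, and log concavity comes from the Hodge-Riemann relations in degree one applied to the pair $(\alpha,\beta)$, in the style of the Khovanskii-Teissier inequalities. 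The K\"ahler package for a graded module constrains its Hilbert function only weakly (e.g.\ unimodality of a palindromic Poincar\'e polynomial), and $P_M(t)$ is not palindromic -- its degree is less than $\tfrac12\rk M$. Indeed, in the realizable case the module you propose already exists (the intersection cohomology of the reciprocal plane, as in the remark following Theorem \ref{reciprocal}), and it yields non-negativity (Conjecture \ref{non-neg}) but not log concavity; if your deduction were valid, log concavity in the realizable case would already follow from that geometry, which it does not. So beyond constructing $\mathcal{H}(M)$ and its decomposition -- itself a major undertaking in the spirit of \cite{EW14} -- you would still need the genuinely new ingredient: a realization of the KL coefficients as mixed intersection-type numbers of two Lefschetz-like operators (or some other mechanism) to which a degree-one Hodge-Riemann argument can be applied. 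Without that, the plan does not reach the conclusion even granting all of its hard intermediate steps.
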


Further experimentation leads us to strengthen this conjecture as follows.

\begin{conjecture}
\label{conjecture:real-rooted}
For every matroid $M$, all roots of $P_M(t)$ lie on the negative real axis.
\end{conjecture}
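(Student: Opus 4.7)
The plan is to attack Conjecture \ref{conjecture:real-rooted} by strong induction on $\rk M$, run jointly with Conjecture \ref{conjecture:interlacing}, since experience with polynomial positivity arguments says that interlacing, rather than real-rootedness alone, is the property preserved by natural combinatorial operations. The engine would be the standard \emph{common interlacer} principle: if a finite family $\{f_i\}$ of real-rooted polynomials with positive leading coefficients admits a common interlacer, then every nonnegative linear combination $\sum c_i f_i$ is real-rooted. The defining recursion in Theorem \ref{thm:KL-exists}(3) is a weighted sum over flats, which is precisely the shape this machinery targets.

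The first step is to solve the recursion for $P_M(t)$ itself. Because $\deg P_M < \tfrac{1}{2}\rk M$, the polynomial $P_M(t)$ and its palindromic completion $t^{\rk M} P_M(t^{-1})$ occupy disjoint degree ranges, and so after peeling off the $F=\emptyset$ summand from Theorem \ref{thm:KL-exists}(3) one can recover $P_M(t)$ as the low-degree truncation
\[
 P_M(t) \;=\; -\bigl[\sum_{F\neq \emptyset}\chi_{M_F}(t)\, P_{M^F}(t)\bigr]_{\le \lfloor (\rk M-1)/2\rfloor}.
\]
The induction hypothesis applied to each $M^F$ supplies real roots for $P_{M^F}$, and the theorem of Adiprasito--Huh--Katz~\cite{AHK} supplies real roots for $\chi_{M_F}$. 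So each individual product $\chi_{M_F}(t)\,P_{M^F}(t)$ is already real-rooted provided the two factors interlace suitably, which is exactly what one would hope to extract from Conjecture \ref{conjecture:interlacing} applied at intermediate ranks.

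The second step is to identify a polynomial that interlaces every $\chi_{M_F}(t)\,P_{M^F}(t)$ simultaneously. A natural candidate is the reduced characteristic polynomial $\overline{\chi}_M(t)=\chi_M(t)/(t-1)$, or more ambitiously a polynomial built from the Kazhdan--Lusztig polynomial of a quotient of $M$ by a single atom. Once such a common interlacer is in hand, the weighted-sum principle yields real-rootedness of the ambient sum $t^{\rk M}P_M(t^{-1})$, and one would then argue by degree considerations that this forces real-rootedness of $P_M(t)$ separately.

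The main obstacle is that the truncation step is a \emph{signed} operation: the expansion of $\chi_{M_F}(t)$ already has alternating signs, so the right-hand side of the recursion is not a nonnegative combination of real-rooted polynomials, and the common-interlacer principle does not directly apply. There are two workarounds I would try. The first is to reorganize the sum by grouping flats of common rank and applying M\"obius inversion in $L(M)$ so that alternating signs cancel into genuinely nonnegative blocks, imitating the way Whitney's theorem encodes the characteristic polynomial. The second, available only when $M$ is realizable, is to bypass the recursion entirely and interpret $P_M(t)$ as the intersection cohomology Poincar\'e polynomial of the reciprocal plane $X_\cA$, then seek a hard-Lefschetz-type operator on $IH^*(X_\cA)$ whose primitive decomposition forces the claimed root structure. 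Extracting real-rootedness (as opposed to mere log-concavity) from such a Hodge-theoretic structure is, to my knowledge, an unresolved problem even in the classical Kazhdan--Lusztig setting, and this is where the genuine difficulty lies.
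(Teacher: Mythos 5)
You have set out to prove something the paper itself does not prove: the statement is an open conjecture, and the only case established in the paper is $P_{U_{1,d}}(t)$, handled by a completely different (and self-contained) argument via multiplier sequences applied to the real-rooted polynomial $h_d(t)$ from \cite{zhang16}, using the explicit formula \eqref{uoned} from \cite{ukl}. Your proposal is a research plan rather than a proof, and it contains a concrete error at its foundation: you assert that the theorem of Adiprasito--Huh--Katz \cite{AHK} ``supplies real roots for $\chi_{M_F}$.'' It does not. That theorem gives log concavity (with no internal zeros) of the absolute values of the coefficients of the characteristic polynomial, and the paper explicitly warns that the characteristic polynomial of a matroid, as well as the polynomial obtained by taking absolute values of its coefficients, is \emph{not} in general real rooted. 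So the summands $\chi_{M_F}(t)P_{M^F}(t)$ in the recursion of Theorem \ref{thm:KL-exists}(3) need not even be real rooted, and the common-interlacer engine you want to run has nothing to act on.

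Beyond that, the two remaining load-bearing steps are not carried out. First, as you concede, the recursion is a signed sum (the coefficients of $\chi_{M_F}$ alternate), so the nonnegative-combination hypothesis of the common-interlacer principle fails, and your proposed reorganizations (grouping by rank, M\"obius inversion, Whitney-style cancellation) are named but not executed; moreover the low-degree truncation you use to isolate $P_M(t)$ is itself an operation that does not preserve real-rootedness, so even real-rootedness of the full right-hand side would not immediately transfer to $P_M(t)$ without the degree-separation argument being made quantitative. Second, the joint induction leans on Conjecture \ref{conjecture:interlacing}, which is also open and is not established at any rank by your argument, so the induction has no closed step and no base beyond the modular case. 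The Hodge-theoretic fallback you mention for realizable matroids correctly identifies where a genuine proof might come from, but as you note, extracting real-rootedness (rather than log concavity or unimodality) from such structures is itself unresolved. In short: the proposal mis-cites \cite{AHK} on the key input, does not close the signed-sum obstruction it identifies, and assumes an open conjecture, so it does not constitute a proof of Conjecture \ref{conjecture:real-rooted} even in outline.
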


Note that real rootedness is much stronger than log concavity; in particular, the characteristic polynomial
of a matroid (as well as the polynomial obtained by taking absolute values of the coefficients) is {\bf not}
in general real rooted.

We give a proof of Conjecture \ref{conjecture:real-rooted} in the simplest nontrivial case.
Let $U_{m,d}$ be the uniform matroid of rank $d$ on $m+d$ elements.  The matroid $U_{0,d}$
is Boolean; in particular, its lattice of flats is modular, and its Kazhdan-Lusztig polynomial is 1.
The next case is $U_{1,d}$, which is isomorphic to the graphical matroid associated with the cycle of length $d+1$.
By \cite[Theorem 1.2(1)]{ukl}, we have
\begin{equation}\label{uoned}
P_{U_{1,d}}(t) = \sum_{i\geq 0}\;\frac{1}{i+1} \binom{d-i-1}{i} \binom {d+1}{i}\; t^i
\end{equation}
for all $d>0$.

\begin{theorem}
All of the roots of $P_{U_{1,d}}(t)$ lie on the negative real axis.
\end{theorem}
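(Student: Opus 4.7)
The plan is to use the explicit formula~\eqref{uoned} to recognize $P_{U_{1,d}}(t)$ as a hypergeometric polynomial of classical type, then reduce the real-rootedness claim to a known result.

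First I would simplify, using the identity $\tfrac{1}{i+1}\binom{d+1}{i}=\tfrac{1}{d+2}\binom{d+2}{i+1}$, to obtain
$$P_{U_{1,d}}(t) \;=\; \frac{1}{d+2}\sum_{i\ge 0}\binom{d+2}{i+1}\binom{d-i-1}{i}\,t^i.$$
Next I would compute the ratio of consecutive coefficients,
$\frac{a_{i+1}}{a_i}=\frac{(d-2i-1)(d-2i-2)(d+1-i)}{(i+1)(i+2)(d-i-1)}$, which identifies $P_{U_{1,d}}(t)$ as the terminating hypergeometric series
$$P_{U_{1,d}}(t) \;=\; {}_3F_2\!\left(-\tfrac{d-1}{2},\,-\tfrac{d-2}{2},\,-(d+1);\;-(d-1),\,2;\;4t\right);$$
the two upper parameters $-\tfrac{d-1}{2},-\tfrac{d-2}{2}$ differ by $\tfrac{1}{2}$, and the parity of $d$ controls which one terminates the series.

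Then I would apply a classical hypergeometric transformation (a Thomae or Whipple identity, or a sequence of contiguous relations) to reduce this ${}_3F_2$ to a ${}_2F_1$, and thereby identify $P_{U_{1,d}}(t)$, up to a monomial prefactor and a linear change of variable, with a Jacobi polynomial $P_n^{(\alpha,\beta)}(x)$ for suitable real $\alpha,\beta>-1$ (likely depending on $d$). Since Jacobi polynomials with such parameters are real-rooted on $(-1,1)$, an appropriate substitution transports real-rootedness to $P_{U_{1,d}}(t)$ on the negative real axis.

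The hard part will be finding the correct transformation: the ${}_3F_2$ above is not visibly balanced, well-poised, or Saalschützian, and the negative integer lower parameter $-(d-1)$ requires care to avoid degenerate Pochhammer symbols; a case analysis by the parity of $d$ is likely needed. As a fallback, I would note that $h_d(t) := \bigl(t\,P_{U_{1,d}}(t)\bigr)' = \sum_i \binom{d-i-1}{i}\binom{d+1}{i}\,t^i$ is the Hadamard product of $(1+t)^{d+1}$ and the Fibonacci-type polynomial $\sum_i \binom{d-i-1}{i}\,t^i$ (both real-rooted with roots in $(-\infty,0]$, the latter being a classical fact about Fibonacci/Chebyshev polynomials), so by Malo's theorem $h_d$ is real-rooted with roots in $(-\infty,0]$. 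Anti-differentiation does not in general preserve real-rootedness, so lifting this to $P_{U_{1,d}}$ would require an additional argument — e.g., an interlacing/anti-Rolle argument exploiting the positivity of all coefficients of $P_{U_{1,d}}$ together with the fact that $t\,P_{U_{1,d}}$ vanishes at $0$.
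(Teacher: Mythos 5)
Neither of your two routes is complete, and the fallback has a concrete gap that cannot be repaired in the generality you suggest. Your main route stops exactly where the work begins: you identify $P_{U_{1,d}}(t)$ as a terminating ${}_3F_2$ (the coefficient-ratio computation is correct), but the reduction to a Jacobi polynomial is only hoped for, not exhibited, and you yourself note that the series is not balanced, well-poised, or Saalsch\"utzian and that the negative integer lower parameter $-(d-1)$ is delicate. As written this is a plan, not a proof. Your fallback correctly shows, via Malo's theorem applied to $(1+t)^{d+1}$ and the real-rooted Fibonacci-type polynomial $\sum_i\binom{d-i-1}{i}t^i$, that $\bigl(tP_{U_{1,d}}(t)\bigr)'$ is real-rooted with nonpositive roots. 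But the step back from the derivative to $P_{U_{1,d}}$ is not just ``in general'' false --- the specific patch you propose (positivity of all coefficients plus vanishing of $tP$ at $0$) is insufficient. Take $P(t)=\tfrac{1}{3}(t^2+3t+3)$: all coefficients are positive, $tP(t)$ vanishes at $0$, and $(tP)'(t)=(t+1)^2$ is real-rooted with nonpositive roots, yet $P$ has no real roots. So no interlacing/anti-Rolle argument using only those hypotheses can close the gap; you would need genuinely more information about the coefficients.

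The paper's proof avoids this trap by never separating the factor $\tfrac{1}{i+1}$ (which is what your anti-differentiation step amounts to, and $\{1/(i+1)\}$ is not a multiplier sequence) from the binomial factor. Instead it uses the single finite multiplier sequence $\Gamma(d)=\bigl\{\tfrac{1}{(i+1)!\,(d+1-i)!}\bigr\}$ (citing \cite[Lemma 2.5]{zhang16}), notes that $h_d(t)=\sum_i\binom{d-i-1}{i}t^i$ is real-rooted (\cite[Lemma 3.2]{zhang16}), and observes $P_{U_{1,d}}(t)=(d+1)!\,\Gamma(d)[h_d(t)]$, since $\tfrac{(d+1)!}{(i+1)!(d+1-i)!}=\tfrac{1}{i+1}\binom{d+1}{i}$; positivity of the coefficients then forces all roots onto the negative real axis. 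If you want to salvage your approach, the cleanest fix is exactly this: package $\tfrac{1}{i+1}\binom{d+1}{i}$ as one multiplier sequence rather than handling $\binom{d+1}{i}$ by Malo and $\tfrac{1}{i+1}$ by integration.
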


\begin{proof}
A sequence of real numbers $\Gamma = \{\gamma_i\}$ is called a \textbf{multiplier sequence} if, for any polynomial 
$f(x)=\sum a_it^i\in\mathbb{R}[x]$ with only real roots, the polynomial $$\Gamma[f(x)] := \sum a_i \gamma_i t^i$$ is either
identically zero or has only real roots.
For any fixed positive integer $d$, the sequence $$\Gamma(d) := \left\{ \frac{1}{(i+1)!(d+1-i)!}\right\}$$
is a multiplier sequence \cite[Lemma 2.5]{zhang16}.  Let 
\[
h_d(t) :=\sum_{i\geq 0}\;\binom{d-i-1}{i}\; t^i;
\]
this polynomial is real rooted \cite[Lemma 3.2]{zhang16}.  The fact that $P_{U_{1,d}}(t)$ is real rooted now follows
from the observation that $P_{U_{1,d}}(t) = (d+1)!\,\Gamma(d)[h_d(t)]$.  Since the coefficients of $P_{U_{1,d}}(t)$ are positive
(including the constant coefficient), it cannot have any non-negative real roots, therefore all of the roots lie on the negative real axis.
\end{proof}

If two matroids are related to each other by a contraction, numerical evidence suggests that the roots 
of their Kazhdan-Lusztig polynomials are related to each other
in a predictable way, which we now describe.
Let $f(t)$ be a polynomial of degree $n$ and $g(t)$ a polynomial of degree $n-1$.
We say that $f(t)$ {\bf interlaces} $g(t)$ if $f(t)$ and $g(t)$ are both real rooted and their roots alternate,
starting with the smallest root of $f(t)$.
For any matroid $M$ of positive rank, let $Q_M(t) := t^{\rk M - 1}P_M(-t^{-2})$.  If Conjecture \ref{conjecture:real-rooted}
is true, then the roots of $Q_M(t)$ are real and symmetrically distributed around the origin.
Given an element $e$ of the ground set of $M$, let $M/e$ denote the contraction of $M$ at $e$.

\begin{conjecture}
\label{conjecture:interlacing}
If $M$ and $M/e$ are both non-degenerate, then $Q_M(t)$ interlaces $Q_{M/e}(t)$.
\end{conjecture}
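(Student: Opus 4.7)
The plan is to combine a recursive identity for $P_M(t)$ with respect to a single ground-set element with the general machinery of interlacing polynomials. Note first that Conjecture \ref{conjecture:real-rooted} is a prerequisite: the very statement that $Q_M$ interlaces $Q_{M/e}$ requires both polynomials to be real-rooted. Consequently, any proof must either proceed conditionally on real-rootedness, or establish both statements simultaneously by induction on $\rk M$.

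The first step would be to extract from the defining recursion (item (3) of Theorem \ref{thm:KL-exists}) a relation that isolates the chosen element $e$. That recursion sums over all flats of $M$, but by partitioning those flats according to whether they contain $e$, one should hope to obtain a single-element ``deletion-contraction'' identity expressing $P_M(t)$ in terms of $P_{M/e}(t)$, $P_{M\setminus e}(t)$, and possibly mixed terms $\chi_{M_F}(t) P_{M^F}(t)$ with $e \in F$. Translating such an identity through the substitution $P_M(t)\mapsto Q_M(t) = t^{\rk M - 1} P_M(-t^{-2})$ would recast the conjecture as an interlacing statement among several symmetric real-rooted polynomials. At that stage one would invoke the theory of common interleavers (Chudnovsky--Seymour, Br\"and\'en, Fisk) or the interlacing-family framework of Marcus--Spielman--Srivastava: if every summand on the right-hand side is inductively interlaced by a common auxiliary polynomial, then so is their sum, and one can chain such comparisons to deduce that $Q_M$ interlaces $Q_{M/e}$.

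In parallel, in the realizable case one could pursue a geometric proof mirroring the argument behind Theorem \ref{reciprocal}. The reciprocal plane $X_\cA$ carries a natural stratification whose closed strata correspond to contractions of $M$; if the stratum indexed by $e$ can be realized as a suitable subvariety or hyperplane section of $X_\cA$, then a hard Lefschetz or Lefschetz hyperplane-type theorem on intersection cohomology should directly yield interlacing of the IH Poincar\'e polynomials. This would be in the spirit of the Adiprasito--Huh--Katz program, but applied to $X_\cA$ rather than to the Chow ring of $M$.

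The main obstacle, beyond the fact that real-rootedness itself is open, is that the defining recursion does not single out any element of the ground set, and no clean single-element deletion-contraction formula for $P_M(t)$ is presently known; finding one with coefficients of the right sign and degree behavior for the interlacing machinery to apply is likely to be the heart of any proof. The non-degeneracy hypothesis on both $M$ and $M/e$ is also delicate, since contraction can collapse the degree of $P_M(t)$ in ways that depend subtly on the lattice of flats, and the inductive argument must carefully track when this collapse occurs.
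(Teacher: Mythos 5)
The statement you are addressing is Conjecture \ref{conjecture:interlacing}, which the paper explicitly leaves open: no proof is given there, only computational evidence (and the observation in Remark \ref{P-remark} translating the statement back to $P_M$ and $P_{M/e}$). What you have written is a research program, not a proof, and you say as much yourself in your final paragraph. The central gap is the one you name: the defining recursion in Theorem \ref{thm:KL-exists} runs over all flats of $M$, and partitioning the flats according to whether they contain $e$ does not produce a usable single-element identity. The flats containing $e$ are in bijection with flats of $M/e$, but the corresponding summands are $\chi_{M_F}(t)P_{M^F}(t)$, where $M_F$ still remembers the full localization in $M$ rather than in $M/e$, and the flats not containing $e$ do not assemble into anything resembling $P_{M\setminus e}(t)$; no deletion-contraction formula for $P_M(t)$ is known, and your argument presupposes one with sign and degree behavior strong enough for the common-interleaver machinery. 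Even granting such an identity, the Chudnovsky--Seymour/Br\"and\'en/Marcus--Spielman--Srivastava framework requires nonnegativity or common interlacing hypotheses on the summands that you have not verified and that are at least as hard as Conjectures \ref{non-neg} and \ref{conjecture:real-rooted}, both of which are open; so the proposed induction has no base to stand on.

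The geometric alternative has the same status. In the realizable case the reciprocal plane $X_\cA$ does compute $P_M(t)$ via intersection cohomology, but the reciprocal plane of the contraction $M/e$ is not exhibited in the paper (or in the literature it cites) as a hyperplane section of $X_\cA$, and a hard Lefschetz statement comparing their intersection cohomologies is precisely what would be needed --- it is not something you can simply invoke. Moreover, interlacing of $Q_M$ and $Q_{M/e}$ is a statement about roots of the Poincar\'e polynomials, which does not follow formally from a Lefschetz-type injection of graded vector spaces; one would need an additional Hodge--Riemann-type positivity input, again unestablished in this setting. In short, you have correctly mapped the obstacles, but every load-bearing step (real-rootedness, a deletion-contraction identity, a common interleaver, a Lefschetz package for $X_\cA$ relative to contraction) remains unproved, so the conjecture remains exactly that.
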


\begin{remark}\label{P-remark}
If the rank of $M$ is odd, then the degree of $P_M(t)$ is one greater than the degree of $P_{M/e}(t)$,
and Conjecture \ref{conjecture:interlacing} is equivalent to the statement that
$P_M(t)$ interlaces $P_{M/e}(t)$.
If the rank of $M$ is even, then the degree of $P_M(t)$ is equal to that of $P_{M/e}(t)$,
and Conjecture \ref{conjecture:interlacing} is equivalent to the statement that
$tP_{M/e}(t)$ interlaces $P_{M}(t)$.
\end{remark}

\section{Equivariant Kazhdan-Lusztig polynomials}
Let $W$ be a finite group, and let $\grRep(W)$ and $\grVRep(W)$ denote its graded representation ring and graded
virtual representation ring, respectively.
If $W$ acts on a matroid $M$ via permutations of the ground set, 
we can define {\bf equivariant Kazhdan-Lusztig polynomial} $P_M^W(t)\in\grVRep(W)$,
which has the property that, when we take the graded dimension, we recover $P_M(t)$ \cite{ekl}.  
We omit the formal definition here, but we note that the basic idea is to replace
the characteristic polynomial of $M$ with its Orlik-Solomon algebra, which we may interpret as a graded virtual
representation of $W$, and then categorify each of the items of Theorem \ref{thm:KL-exists}.  
Though we are no longer in the theoretical framework of Stanley and Brenti, the existence of the equivariant
Kazhdan-Lusztig polynomial still involves checking an equivariant analogue of the statement that the
characteristic polynomial is a $P$-kernel \cite[Lemma 2.7]{ekl}.  This lemma, which is very easy to prove in the non-equivariant
setting, is surprisingly difficult in the presence of a group action.
Conjecture \ref{non-neg} generalizes to the equivariant setting as follows \cite[Conjecture 2.13]{ekl}.

\begin{conjecture}\label{conj:equivariant positivity}
For any equivariant matroid $W \curvearrowright M$, $P^W_M(t)\in\grRep(W)$.
\end{conjecture}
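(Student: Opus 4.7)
The plan is to split the conjecture into two cases and attack them with different methods. First, I would dispose of the case in which $M$ admits a $W$-equivariant realization, meaning a linear representation $W \to \operatorname{GL}(V)$ together with a $W$-stable collection $\cA \subset V \setminus \{0\}$ whose matroid is $M$. In this setting $W$ acts by graded algebra automorphisms on the Orlik-Terao algebra $R_\cA$, hence on the reciprocal plane $X_\cA = \operatorname{Spec} R_\cA$. Inspired by the non-equivariant remark following Theorem \ref{reciprocal}, the natural candidate identity is
\[
P_M^W(t) \;=\; \sum_{i \geq 0} \bigl[IH^{2i}(X_\cA)\bigr]\, t^i \qquad \text{in } \grRep(W),
\]
with $\ell$-adic intersection cohomology taken equivariantly. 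Since the right hand side is manifestly an honest graded representation, establishing this identity settles Conjecture \ref{conj:equivariant positivity} for equivariantly realizable matroids.

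The core of the geometric step is an induction on $\rk M$ verifying that the equivariant $IH$ Poincar\'e polynomial satisfies the defining properties of $P_M^W(t)$ from \cite{ekl}, that is, the equivariant analogue of Theorem \ref{thm:KL-exists}. The degree bound and the rank-zero base case follow from the corresponding non-equivariant facts, since the $W$-action does not alter gradings. The substantive content is the equivariant refinement of item (3), which I would obtain by applying the $W$-equivariant Beilinson-Bernstein-Deligne decomposition theorem to a $W$-equivariant resolution or compactification of $X_\cA$, stratified by flats. On each stratum indexed by a flat $F$, the Orlik-Solomon algebra of $M_F$ appears as cohomology and is naturally a module for the stabilizer $W_F \le W$, producing precisely the induced representations from $W_F$ that the equivariant recursion demands.

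For matroids that are not equivariantly realizable (in particular those not realizable over any field), the geometric model is absent and a purely combinatorial or representation-theoretic argument is required. A plausible route is a $W$-equivariant analogue of the Soergel-bimodule strategy used by Elias and Williamson for general Coxeter groups: one would build, from the equivariant Orlik-Solomon data of the various $M_F$ and $M^F$, a concrete graded $W$-module whose class in $\grVRep(W)$ equals $P_M^W(t)$, and then verify an equivariant hard Lefschetz / Hodge-Riemann package inductively.

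This last step is the main obstacle. Conjecture \ref{non-neg} is itself still open, and the equivariant strengthening is strictly harder because one must control characters of every stabilizer $W_F$ at each level of the recursion rather than only dimensions. As intermediate targets I would first attempt the conjecture for the explicit families highlighted in this paper -- uniform, thagomizer, and braid matroids under their natural symmetric group actions -- where the coefficients of $P_M^W(t)$ are already known or conjectured to be Schur positive symmetric functions, and are therefore automatically honest $S_n$-representations, providing nontrivial evidence and possibly a template for the general combinatorial construction.
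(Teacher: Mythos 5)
The statement you are addressing is a conjecture, not a theorem: the paper states it as Conjecture \ref{conj:equivariant positivity} and offers no proof, only partial evidence. Your proposal likewise does not prove it, and you say as much, so the honest assessment is that there is a genuine gap and it is exactly where you place it. Your first step, the equivariantly realizable case via the $W$-action on the Orlik--Terao algebra and the equivariant intersection cohomology of the reciprocal plane, is not new ground: this is precisely the content of Remark \ref{eq-re}, quoting \cite[Corollary 2.12]{ekl}, and it is the only case in which the conjecture is known by geometric means. Your second step --- an equivariant Soergel-bimodule-style construction with a hard Lefschetz / Hodge--Riemann package built from the Orlik--Solomon data of the $M_F$ and $M^F$ --- is a program, not an argument: no candidate graded $W$-module is constructed, no equivariant $P$-kernel or decomposition statement is verified, and this is exactly the open content of the conjecture (note that even the non-equivariant Conjecture \ref{non-neg} is open, so nothing here can be finessed).

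Two factual corrections to your framing of the ``intermediate targets.'' First, the uniform case is not merely ``known or conjectured'': it is a theorem (Theorem \ref{uniform-description} and Corollary \ref{uniform-pos}), and it is proved not by any geometric or Hodge-theoretic device but by translating the defining recursion into a functional equation for symmetric-function generating series and exhibiting an explicit Schur-positive solution. This matters because $U_{m,d}$ is equivariantly \emph{non}-realizable for $m,d>1$, so it is the one place where the conjecture is known outside the reach of your first step; the method is combinatorial, not a template for your proposed Lefschetz package. Second, braid matroids are equivariantly realizable, so they add nothing beyond the realizable case, while for thagomizer matroids the equivariant Kazhdan--Lusztig polynomial itself has only a conjectural description \cite[Conjecture 4.1]{thag}, so Schur positivity there is not currently available as evidence. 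In short: the realizable half of your plan reproduces what is already in the literature, and the non-realizable half names the obstacle without removing it.
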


\begin{remark}
\label{eq-re}
If $M$ is equivariantly realizable over the complex numbers, 
then $P^W_M(t)$ may be identified with the isomorphism class of the intersection
cohomology of the reciprocal plane, and we obtain a proof of Conjecture \ref{conj:equivariant positivity} that is similar
to the proof of Conjecture \ref{non-neg} in the realizable case \cite[Corollary 2.12]{ekl}.
\end{remark}

Uniform matroids constitute an interesting class of equivariant matroids.  The symmetric group
$S_{m+d}$ acts on the uniform matroid $U_{m,d}$.  Though uniform matroids are all realizable,
$U_{m,d}$ is equivariantly non-realizable provided that $m$ and $d$ are both greater than 1.
Thus Remark \ref{eq-re} does not apply to $U_{m,d}$, but Conjecture \ref{conj:equivariant positivity} 
nonetheless holds for this matroid (Corollary \ref{uniform-pos}).
We regard this as a compelling piece of evidence for Conjecture \ref{conj:equivariant positivity} and,
by extension, for Conjecture \ref{non-neg}.

Another interesting class of equivariant matroids is the class of braid matroids.  These are the graphical matroids
associated with complete graphs, and the symmetric group acts by permuting the vertices.  These matroids {\em are}
equivariantly realizable, so Conjecture \ref{conj:equivariant positivity} follows from Remark \ref{eq-re}.
It is still an open problem to determine the representations that appear; we discuss this problem in more detail in the next section.

It is not clear how to generalize Conjecture \ref{conjecture:real-rooted} to the equivariant setting, but we do have
an equivariant analogue of Conjecture \ref{conjecture:log concave}.
The following definition appears in~\cite{ekl}.

\begin{definition} A sequence $(C_0, C_1, C_2, \ldots)$ in $\VRep(W)$
is {\bf log concave} if, for all $i>0$, $C_i^{\otimes 2} - C_{i-1}\otimes C_{i+1} \in \Rep(W)$.
It is {\bf strongly log concave} if, for all $i\leq j\leq k\leq l$ with $i+l=j+k$, $C_j\otimes C_k - C_{i}\otimes C_{l} \in \Rep(W)$.
We call an element of $\grVRep(W)$ (strongly) log concave if its sequence of coefficients is (strongly) log concave.
\end{definition}

\begin{remark}
If $W$ is trivial, then strong log concavity is equivalent to log concavity, which agrees with the usual notion.
If $W$ is non-trivial, then strong log concavity is stronger than log concavity, and only strong log concavity
is preserved under tensor product.\footnote{The proof of this fact has been communicated to us by David Speyer.}
Thus strong log concavity is a more natural notion.
\end{remark}

The following generalization of Conjecture \ref{conjecture:log concave} appears in \cite[Conjecture 5.3(2)]{ekl}, 
and has been checked on a computer for uniform and braid matroids of small rank.

\begin{conjecture}
For any equivariant matroid $W\curvearrowright M$, $P_M^W(t)$ is strongly log concave.
\end{conjecture}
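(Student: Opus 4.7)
The plan is to attack this conjecture in two stages. Writing $P_M^W(t) = \sum C_i\, t^i$, strong log concavity amounts to producing, for each quadruple $i \le j \le k \le l$ with $i+l=j+k$, a $W$-equivariant injection $C_i \otimes C_l \hookrightarrow C_j \otimes C_k$. The first stage handles the equivariantly realizable case via equivariant Hodge theory on the reciprocal plane, while the second stage attempts to bootstrap to arbitrary equivariant matroids via a combinatorial Hodge-theoretic framework in the spirit of Adiprasito--Huh--Katz.

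In the realizable case, suppose $W$ acts on a complex arrangement $\cA$ realizing $M$. By Remark~\ref{eq-re}, $C_i$ is the class of $\operatorname{IH}^{2i}(X_\cA)$ as a $W$-representation. I would try to produce the required injections from an equivariant K\"ahler package: choose a $W$-invariant Lefschetz class $\omega \in \operatorname{IH}^2(X_\cA)$ via averaging, use hard Lefschetz to identify opposite graded pieces of intersection cohomology $W$-equivariantly, and then read off the injection from an equivariant strengthening of the Hodge--Riemann bilinear relations. This would mirror how Hodge--Riemann delivers Newton's inequalities (and hence log concavity) for matroid Chow rings in the classical setting, except that one must extract an honest inclusion of $W$-representations rather than merely a numerical inequality of dimensions.

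To remove the realizability hypothesis, the plan is to replace $\operatorname{IH}^{*}(X_\cA)$ by a purely combinatorial graded $W$-module attached to $M$ whose graded character equals $P_M^W(t)$, and to prove an equivariant K\"ahler package for it directly. One would equip this module with a $W$-equivariant Lefschetz operator and verify the package by induction on $\rk M$, using the recursion of Theorem~\ref{thm:KL-exists} to reduce to the localizations $M_F$ and restrictions $M^F$, each of which carries the residual action of the stabilizer $W_F$ and re-assembles via induction of representations from $W_F$ to $W$.

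The main obstacle, present in both stages, is the equivariant upgrade of Hodge--Riemann. In the realizable case the difficulty is to extract honest representations (rather than dimensions) from the signature of the Hodge--Riemann form; in the non-realizable case the difficulty is to build a full K\"ahler package synthetically, equivariantly, and in an inductively compatible way. A secondary difficulty, specific to strong rather than ordinary log concavity, is that one must produce the injections coherently across all quadruples $(i,j,k,l)$ with $i+l=j+k$ at once rather than just for adjacent indices; the hope is that the full K\"ahler package, rather than a single Lefschetz isomorphism, supplies all of these simultaneously.
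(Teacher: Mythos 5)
There is no proof of this statement to compare against: in the paper (and in \cite[Conjecture 5.3(2)]{ekl}) strong log concavity of $P_M^W(t)$ is an \emph{open conjecture}, supported only by computer verification for uniform and braid matroids of small rank. What you have written is a research program rather than a proof, and you say so yourself at the decisive points: both the ``equivariant strengthening of the Hodge--Riemann bilinear relations'' in the realizable case and the ``synthetic, equivariant K\"ahler package'' in the general case are named as obstacles, not established. Since the content of strong log concavity is precisely that $C_j\otimes C_k - C_{i}\otimes C_{l}\in\Rep(W)$ for all $i\leq j\leq k\leq l$ with $i+l=j+k$, and your plan defers exactly the step that would produce such a statement, the proposal does not prove the conjecture.

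Beyond the acknowledged gaps, the first stage has a structural problem as formulated. The reciprocal plane $X_\cA$ is an affine variety, and the sequence of coefficients $C_i$ of $P_M^W(t)$ is not palindromic: by Theorem \ref{thm:KL-exists}, $\deg P_M(t) < \tfrac{1}{2}\rk M$, so $\operatorname{IH}^{*}(X_\cA)$ cannot satisfy a hard Lefschetz symmetry ``identifying opposite graded pieces'' in the way you propose; the K\"ahler package you want to average over $W$ does not exist on $X_\cA$ in that form. The Adiprasito--Huh--Katz package you cite as a model lives on the Chow ring of $M$ and controls the characteristic polynomial, not the Kazhdan--Lusztig polynomial, and the paper itself warns that the characteristic polynomial is not even real rooted, so that analogy cannot be transported verbatim. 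Finally, even if some Hodge--Riemann form were available on a module with graded character $P_M^W(t)$, signature and positivity statements yield inequalities of dimensions; converting them into honest inclusions (or nonvirtual differences) of $W$-representations is exactly the missing idea, and nothing in the proposal indicates how to obtain it, either for adjacent indices or for the full set of quadruples required by strong log concavity.
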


%========================================================================
\section{Examples}
We conclude by discussing some specific classes of matroids for which we have various complete or partial results.
The matroids described in Sections~\ref{sec:uniform}~and~\ref{sec:thag} are the only nontrivial
classes of matroids for which a complete description of the Kazhdan-Lusztig polynomial is known.
% In the case of uniform matroids, the equivariant Kazhdan-Lusztig polynomial can be completely described,
% but the log concavity and real rootedness conjectures remain open.  Another class of matroids,
% called thagomizer matroids, comprise the only other non-trivial family of examples for which we have a complete
% description of the (non-equivariant) Kazhdan-Lusztig polynomial.  For braid matroids

% We now list several families of equivariant matroids whose KL polynomials we have studied.  These are amongst the most well-studied matroids, but nonetheless computing their KL polynomials is far from trivial.  In the case of the braid matroids we don't yet have a simple expression for the KL polynomials, though we do know a functional equation for the EGF of the polynomials, as well as an ansatz for their coefficients.

%===========================================================================
\subsection{Uniform matroids}
\label{sec:uniform}
Let $C_{m,d,i}$ be the coefficient of $t^i$ in the $S_{m+d}$-equivariant Kazhdan-Lusztig polynomial of the uniform matroid $U_{m,d}$.
For any partition $\la$, let $V[\la]$ be the irreducible representation of $S_{|\la|}$ indexed by $\la$ (corresponding to the Schur function $s[\la]$ under the Frobenius character map).  The following theorem is proved in \cite[Theorem 3.1]{ekl}. 

\begin{theorem}\label{uniform-description}
For all $i>0$,
$${C_{m,d,i}\; % = \sum_{\substack{a+b+2i-1=d+m\\ 1\leq b\leq m < a}} V_{[a,b+1,2^{i-1}]}
\;\; = \bigoplus_{b=1}^{\min(m,d-2i)} V[d+m-2i-b+1,b+1,2^{i-1}]\;\in\;\Rep(S_{m+d}).}$$
\end{theorem}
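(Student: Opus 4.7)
The plan is to induct on the rank $d$, using the equivariant analogue of the defining recursion in Theorem \ref{thm:KL-exists}(3) established in \cite{ekl}. For the uniform matroid $U_{m,d}$, the $S_{m+d}$-orbits of flats are indexed by rank: for each $k \in \{0,1,\ldots,d-1\}$ there is a single orbit of rank-$k$ flats with stabilizer $S_k \times S_{m+d-k}$, localization $U_{0,k}$ (Boolean), and restriction $U_{m,d-k}$; the top flat is its own orbit. The recursion therefore reads
$$t^d\, P^{S_{m+d}}_{U_{m,d}}(t^{-1}) \;=\; \sum_{k=0}^{d-1} \Ind_{S_k \times S_{m+d-k}}^{S_{m+d}}\!\Bigl(\chi^{S_k}_{U_{0,k}}(t)\otimes P^{S_{m+d-k}}_{U_{m,d-k}}(t)\Bigr) \;+\; \chi^{S_{m+d}}_{U_{m,d}}(t).$$

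I would translate this to an identity of symmetric functions via the Frobenius character map, under which induction from Young subgroups becomes multiplication. The equivariant characteristic polynomial of a Boolean matroid is computable from the exterior algebra of the permutation representation, giving an explicit hook-Schur expansion of $\ch \chi^{S_k}_{U_{0,k}}(t)$; similarly, $\ch \chi^{S_{m+d}}_{U_{m,d}}(t)$ is computed by equivariant M\"obius inversion on the truncated Boolean lattice of flats of $U_{m,d}$. Combined with the inductive hypothesis applied to each $\ch P^{S_{m+d-k}}_{U_{m,d-k}}(t)$, the right-hand side becomes an explicit sum of products of Schur functions that can be expanded by the Pieri rule. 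The degree bound $\deg P^{S_{m+d}}_{U_{m,d}}(t) < d/2$ is handled in concert with the recursion: by the uniqueness part of Theorem \ref{thm:KL-exists} (equivariantly established in \cite{ekl}), it suffices to verify that the proposed formula satisfies the recursion and has degree strictly less than $d/2$, the latter being immediate from the constraint $b \le d-2i$, which forces $C_{m,d,i} = 0$ for $2i \ge d$.

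The main obstacle will be the combinatorial identity that emerges after the Pieri expansion: multiplying a hook Schur function by a Schur function of the predicted shape $[d+m-2i-b+1,\,b+1,\,2^{i-1}]$ produces many Littlewood--Richardson terms, and one must show that after summing over $k$ and the alternating contributions of $\chi^{S_k}_{U_{0,k}}(t)$, only the predicted family of shapes survives, each with multiplicity one. The tail $2^{i-1}$ in the proposed shape is helpful, as it sharply restricts which pieces of the Pieri expansion can contribute. A promising strategy is to stratify shapes by the length of their third row (which must equal $i-1$ in the output) and to exhibit a sign-reversing involution on the Littlewood--Richardson tableaux that governs the cancellations between adjacent values of $k$. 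Managing these cancellations---rather than any conceptual issue---is the real work.
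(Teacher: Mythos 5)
Your framework is sound and is essentially the framework used in \cite{ekl}: the orbit analysis of flats of $U_{m,d}$ (one orbit of rank-$k$ flats for $0\le k\le d-1$ with stabilizer $S_k\times S_{m+d-k}$, Boolean localization, uniform restriction, plus the top flat), the passage to symmetric functions via the Frobenius character (induction from a Young subgroup becoming multiplication), and the appeal to the uniqueness statement of Theorem \ref{thm:KL-exists} in its equivariant form, so that it suffices to check the degree bound (which you correctly get from $b\le d-2i$) and the recursion itself. However, there is a genuine gap at exactly the point you flag as ``the real work'': the verification that the proposed shapes $[d+m-2i-b+1,b+1,2^{i-1}]$ satisfy the recursion is never carried out. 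You reduce the theorem to an intricate Littlewood--Richardson cancellation --- hook-shaped contributions from the Boolean localizations and from the equivariant characteristic polynomial of $U_{m,d}$, multiplied against the conjectured three-row (or two-row) shapes, summed with signs over all $k$ --- and then only assert that ``a promising strategy'' would be a sign-reversing involution stratified by the length of the third row. No such involution is exhibited, and it is not clear that one organized by adjacent values of $k$ exists; the alternating sum over $k$ interacts with the internal alternation inside each equivariant characteristic polynomial, so the cancellation pattern is not local in $k$. As it stands, the proposal proves nothing beyond the (easy) degree bound; the multiplicity-one survival of precisely the predicted shapes is the entire content of Theorem \ref{uniform-description}.

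For comparison, the proof in \cite{ekl} avoids shape-by-shape bookkeeping altogether: it assembles the recursions for all $m$ and $d$ into the single functional equation $\cP(t^{-1},tu,x) = \cH(t,u,x) + \frac{s(tu)}{s(u)}\,\cP(t,u,x)$ displayed in Section 5.1, and then verifies that the generating function of the claimed answer satisfies this equation by manipulations with the series $s(u)=\sum_n s[n]u^n$. In effect, the Pieri-rule cancellations you would need to control by an involution are absorbed into identities among these series. If you want to salvage your term-by-term approach, you must either construct the involution explicitly (including the boundary cases $d-2i\in\{1,2\}$ and the interaction with the top-flat term $\ch \chi^{S_{m+d}}_{U_{m,d}}(t)$), or follow the paper and work with the generating function, where the uniqueness argument you invoke applies verbatim.
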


\begin{corollary}\label{uniform-pos}
Conjecture \ref{conj:equivariant positivity} holds for $S_{m+d}\curvearrowright U_{m,d}$.
\end{corollary}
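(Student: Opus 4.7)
The plan is simply to unpack Theorem~\ref{uniform-description}, which already does essentially all the work. The corollary asks that each coefficient $C_{m,d,i}$ of $P^{S_{m+d}}_{U_{m,d}}(t)$ lie in $\Rep(S_{m+d})$, not merely in $\VRep(S_{m+d})$.

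First I would dispose of the $i=0$ case separately, since Theorem~\ref{uniform-description} is stated only for $i>0$. The constant term of the non-equivariant polynomial $P_M(t)$ equals $1$, and the equivariant construction refines this so that the constant term of $P^W_M(t)$ is forced to be the trivial representation; hence $C_{m,d,0}=V[m+d]$, which is honest.

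For $i\geq 1$, Theorem~\ref{uniform-description} exhibits $C_{m,d,i}$ as an explicit direct sum of irreducibles $V[\lambda]$, and a direct sum of irreducibles is automatically an honest representation, so it suffices to check that each indexing tuple $\lambda=[d+m-2i-b+1,\,b+1,\,2^{i-1}]$ is a legitimate partition of $m+d$. The parts sum to $(d+m-2i-b+1)+(b+1)+2(i-1)=m+d$. The weakly decreasing condition $d+m-2i-b+1\geq b+1$ rearranges to $b\leq(d+m-2i)/2$, which follows from the chain $b\leq\min(m,d-2i)\leq\tfrac{m+(d-2i)}{2}$, the first inequality being the range of summation and the second being the AM inequality. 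The remaining comparisons $b+1\geq 2$ and $2\geq 2$ are immediate from $b\geq 1$. When the index range is empty (for instance when $m=0$, giving the Boolean matroid $U_{0,d}$) the direct sum is $0$, which is vacuously honest.

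No substantive obstacle arises at the level of the corollary itself; the real content is all in Theorem~\ref{uniform-description}, proved in \cite[Theorem~3.1]{ekl}. The conceptual significance, as flagged in the surrounding text, is that this gives equivariant positivity for a family of matroids that are typically not equivariantly realizable, so the geometric argument of Remark~\ref{eq-re} is unavailable.
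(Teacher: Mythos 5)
Your proposal is correct and matches the paper's route: the corollary is immediate from Theorem \ref{uniform-description}, since an explicit direct sum of irreducibles $V[\la]$ is an honest element of $\Rep(S_{m+d})$, and your checks that each $[d+m-2i-b+1,b+1,2^{i-1}]$ is a genuine partition of $m+d$ (together with the trivial constant term for $i=0$, as established in \cite{ekl}) are exactly the routine verifications involved.
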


\begin{remark}
We may use the hook length formula for the dimension of $V[\la]$ to compute the
graded dimension of $P^{S_d}_{m,d}(t)$, which is equal to the ordinary Kazhdan-Lusztig polynomial of $U_{m,d}$.
When $m=1$, this formula appears in Equation \eqref{uoned}, and it has a nice combinatorial interpretation:  the $i^\text{th}$
coefficient is equal to the number of ways to choose $i$ disjoint chords in a $(d-i+2)$-gon \cite[Remark 1.3]{ukl}.
In particular, if $d=2n-1$, then the top nonzero coefficient is equal to the $n^{\text{th}}$ Catalan number. 
For arbitrary $m$, this formula is messy and unenlightening.
\end{remark}

\begin{remark}
For $m>1$, we know no way of computing the non-equivariant polynomial of $U_{m,d}$ other than by first computing the equivariant
one and then taking the graded dimension.
\end{remark}

\begin{remark}
For any element $e$ in the ground set of $U_{m,d}$, we have $U_{m,d}/e \cong U_{m,d-1}$.  Thus, by fixing $m$ and
varying $d$, Conjecture \ref{conjecture:interlacing} says that we should obtain an infinite sequence of interlacing
polynomials.  Computer calculations support this conjecture.
\end{remark}

\begin{remark}
If we fix the indices $m$ and $i$ and allow $d$ to vary, we obtain a sequence of representations of larger
and larger symmetric groups.  Once $d$ is greater than or equal to $m+2i$, one can obtain
$C_{m,d+1,i}$ from $C_{m,d,i}$ by adding one box to the first row of each partition appearing in the equation
in Theorem \ref{uniform-description}.  This is a reflection of the fact that the sequence is {\bf representation stable}
in the sense of Church and Farb \cite{CF}, or that it admits the structure of a {\bf finitely generated FI-module}
in the sense of Church, Ellenberg, and Farb \cite{church-ellenberg-farb}.
\end{remark}

The proof of Theorem \ref{uniform-description} involves translating the defining recurrence into the language of symmetric functions, 
rewriting this recurrence as a functional equation for the generating function, and then checking that 
the above representation is a solution.  The functional equation is attractive in its own right, so
we reproduce it here.  Let $\operatorname{ch} C_{m,d,i}$ be the Frobenius character of $C_{m,d,i}$, which is a symmetric function
of degree $m+d$.  Let
$$\cP(t,u,x) := \sum_{m=0}^\infty\sum_{d=1}^\infty\sum_{i=0}^\infty \operatorname{ch} C_{m,d,i}\, x^m u^d t^i.$$
Let $$s(u) := \sum_{n=0}^\infty s[n]u^n,$$ and let
$$\cH(t,u,x) := \frac{u}{u-x}\left(-1 + \frac{s(x)}{s(u)}\right) + \frac{tu}{tu-x}\left(\frac{s(tu)}{s(u)} - \frac{s(x)}{s(u)}\right).$$
Then the defining recurrences for the equivariant Kazhdan-Lusztig polynomials of uniform matroids transform into the following single functional
equation \cite[Equation (2)]{ekl}:
$$\cP(t^{-1},tu,x) = \cH(t,u,x) + \big(1 + \cH(t,u,0)\big)\cP(t,u,x) = \cH(t,u,x) + \frac{s(tu)}{s(u)} \cP(t,u,x).$$
This equation implies an analogous statement for (non-equivariant) exponential generating functions.
Let 
$$P(t,u,x) := \sum_{m=0}^\infty\sum_{d=1}^\infty\sum_{i=0}^\infty \operatorname{dim} C_{m,d,i}\, \frac{x^m u^d t^i}{(m+d)!}$$
and $$H(t,u,x) := \frac{u}{u-x}\left(-1 + e^{x-u}\right) + \frac{tu}{tu-x}\left(e^{tu-u} - e^{x-u}\right).$$
Then we have \cite[Equation (3)]{ekl}:
$$P(t^{-1},tu,x) = H(t,u,x) + \big(1 + H(t,u,0)\big)P(t,u,x) = H(t,u,x) + e^{tu-u}P(t,u,x).$$

%===========================================================================
\subsection{Thagomizer matroids}
\label{sec:thag}
Consider the complete bipartite graph $K_{2,n}$, and let $T_n$ be the graph obtained by joining the two distinguished vertices with an edge.  The graph $T_n$ is called a {\bf thagomizer graph}.  Let $P_{K_{2,n}}(t)$ and $P_{T_n}(t)$ be the Kazhdan-Lusztig polynomials of the associated graphical matroids, and let 
$c^{\thag}_{n,k}$ be the coefficient of $t^k$ in $P_{T_n}(t)$.
The following theorem is proved in \cite[Theorem 1.1(1)]{thag}.

\begin{theorem}
\label{thm:thag kl poly}
We have
\[c^{\thag}_{n,k} = \frac{1}{n+1}\binom{n+1}{k}\sum_{j=2k}^n \binom{j-k-1}{k-1}\binom{n+1-k}{n-j},\]
the number of Dyck paths of semilength $n$ with $k$ long ascents.  In particular, $P_{T_n}(1)$ is equal to the $n^\text{th}$ Catalan number.
\end{theorem}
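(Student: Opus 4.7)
The plan is to derive a recursion for $P_{T_n}(t)$ directly from Theorem~\ref{thm:KL-exists} and then verify that the claimed closed form satisfies it. Writing $u, v$ for the two distinguished vertices of $T_n$ and $w_1, \ldots, w_n$ for the rest, the first step is to classify the flats of the graphical matroid $M(T_n)$ according to the connected components of the subgraph $(V, F)$. Every flat $F$ belongs to one of two families: either $u$ and $v$ lie in a single component together with $k$ of the $w_i$'s (there are $\binom{n}{k}$ such flats, each of rank $k+1$), or $u$ and $v$ lie in separate components, with $a$ intermediate vertices attached to $u$ and $b$ to $v$ (there are $\binom{n}{a}\binom{n-a}{b}$ such flats, each of rank $a+b$). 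In the first family $M_F \cong M(T_k)$, while $M^F$ simplifies (i.e., after collapsing parallel elements, which does not change the Kazhdan-Lusztig polynomial) to a Boolean matroid with trivial KL polynomial. In the second family $M_F$ is Boolean of rank $a+b$, and $M^F$ simplifies to $M(T_{n-a-b})$.

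Next, I would compute the characteristic polynomials $\chi_{M_F}(t)$ in each case: the Boolean case gives $(t-1)^{a+b}$, while a straightforward chromatic-polynomial calculation shows that $\chi_{T_k}(t) = (t-1)(t-2)^k$. Substituting into the defining recurrence of Theorem~\ref{thm:KL-exists}(3) and collapsing the first family's sum via the binomial theorem produces the clean single recursion
\begin{equation*}
 t^{n+1} P_{T_n}(t^{-1}) \;=\; (t-1)^{n+1} + \sum_{k=0}^{n} \binom{n}{k}\bigl(2(t-1)\bigr)^{n-k} P_{T_k}(t),
\end{equation*}
which, together with the degree bound $\deg P_{T_n}(t) < \tfrac{1}{2}(n+1)$, determines $P_{T_n}(t)$ uniquely.

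With this recursion in hand I would finish the proof in two stages. In stage one, verify that the proposed closed form produces a polynomial of the correct degree (the inner sum is non-empty only when $k \leq n/2$) and satisfies the recursion. The cleanest tool is the exponential generating function $G(u,t) := \sum_{n \geq 0} P_{T_n}(t)\,u^n/n!$, in terms of which the recursion becomes the functional equation
\begin{equation*}
 t\, G(tu,\, t^{-1}) \;=\; (t-1)\, e^{(t-1)u} + e^{2(t-1)u}\, G(u,t).
\end{equation*}
Build the candidate $G^{\ast}(u,t)$ from the explicit formula for $c^{\thag}_{n,k}$ and verify this functional equation. In stage two, identify $c^{\thag}_{n,k}$ with the number of Dyck paths of semilength $n$ with $k$ long ascents, either by showing that the Dyck-path count satisfies the same recursion (via a first-return decomposition of a Dyck path while tracking long ascents) or by a direct bijection with the objects counted by the closed formula. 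The Catalan identity $P_{T_n}(1) = C_n$ then drops out by summing over $k$.

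The main obstacle will be stage one: algebraically verifying the functional equation for the candidate generating function. The closed form involves a convolution pairing $\binom{j-k-1}{k-1}$ (the characteristic "long ascent" building block) with a second binomial $\binom{n+1-k}{n-j}$, and matching this to the right-hand side of the recursion will require a nontrivial rearrangement, most likely via a Lagrange inversion or a substitution that converts the exponential convolution $e^{2(t-1)u} G(u,t)$ into a product. Once the functional equation is established, the Dyck-path identification in stage two is comparatively routine.
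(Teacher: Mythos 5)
Your setup is correct and follows the same route as the cited proof: your classification of the flats of the thagomizer matroid into the two families (both distinguished vertices in one block, giving $M_F\cong M(T_k)$ with $\chi_{T_k}(t)=(t-1)(t-2)^k$ and Boolean contraction; or the distinguished vertices separated, giving Boolean $M_F$ and contraction simplifying to $T_{n-a-b}$) is accurate, and collapsing it yields exactly the recurrence quoted in this survey from \cite[Lemma 3.1(1)]{thag}, namely $t^{n+1}P_{T_n}(t^{-1}) - P_{T_n}(t) = (t-1)^{n+1} + \sum_{i<n}\binom{n}{i}2^{n-i}(t-1)^{n-i}P_{T_i}(t)$; your exponential generating function identity is likewise equivalent to the equation $T(t^{-1},tu)=(tu-u)e^{tu-u}+e^{2(tu-u)}T(t,u)$ recorded in the paper. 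The standard degree argument for uniqueness is also fine.

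However, there is a genuine gap: everything you have actually established is the recurrence, which is the easy half. The content of the theorem is the solution of that recurrence --- verifying that the stated closed form $c^{\thag}_{n,k}$ (equivalently, the Dyck-path statistic) satisfies it, and that the closed form and the Dyck-path count agree. You explicitly defer both: ``stage one'' is acknowledged as the main obstacle, with only a vague appeal to Lagrange inversion or a substitution, and ``stage two'' is left as ``either a recursion for Dyck paths tracking long ascents or a direct bijection,'' with neither carried out. These are not routine bookkeeping steps; in the source \cite{thag} they constitute the bulk of the proof (a nontrivial induction/generating-function verification plus the combinatorial identification), and nothing in your write-up shows that the proposed candidate actually satisfies the functional equation, nor that summing over $k$ at $t=1$ produces the Catalan number except as a consequence of the unproved Dyck-path identification. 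As it stands, the proposal reduces the theorem to its hardest step and stops there.
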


We now use Theorem \ref{thm:thag kl poly} to compute the Kazhdan-Lusztig polynomial of $K_{2,n}$.

\begin{theorem}
If $n\geq 2$, then
$P_{K_{2,n}}(t) = P_{T_n}(t) + t$.
\end{theorem}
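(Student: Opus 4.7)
My plan is to prove the identity by applying the recursion of Theorem~\ref{thm:KL-exists}(3) to both $M(K_{2,n})$ and $M(T_n)$ and comparing the two sums term-by-term. Let $u,v$ be the two apex vertices, $w_1,\dots,w_n$ the bipartite part, and $e = uv$ the extra edge of $T_n$, so $M(K_{2,n}) = M(T_n)\setminus e$. The first step is to describe a canonical injection $L(M(K_{2,n}))\hookrightarrow L(M(T_n))$. Flats of a graphical matroid correspond to partitions of the vertex set whose blocks induce connected subgraphs, and the only difference between these two lattices comes from the block $\{u,v\}$, which is connected in $T_n$ (via $e$) but not in $K_{2,n}$. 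So a flat $F'\in L(M(K_{2,n}))$ in which $u$ and $v$ lie in different blocks is sent to itself (\textbf{Case A}), while a flat in which they share a common block $\{u,v\}\cup T$, $T\ne\emptyset$, is sent to $F'\cup\{e\}$ (\textbf{Case B}); the unique omitted flat is $F_0:=\{e\}$.

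The second step is to compare the pair of summands attached to each flat. In Case A with $F'\ne\emptyset$, the localizations $M(K_{2,n})_{F'}$ and $M(T_n)_{F'}$ will be identical, while the restrictions $M(K_{2,n})^{F'}$ and $M(T_n)^{F'}$ differ only by one edge (the image of $e$ after contraction) that is parallel to an already existing parallel class between the blocks of $u$ and $v$; hence their simplifications coincide and they share the same Kazhdan--Lusztig polynomial. For $F'=\emptyset$ the summands are simply $P_{K_{2,n}}(t)$ and $P_{T_n}(t)$, whose difference is the unknown we want to compute. In Case B the restrictions both simplify to a Boolean matroid of rank $n-|T|$ (so $P=1$), while the localizations are $M(K_{2,|T|})$ and $M(T_{|T|})$; a short chromatic computation (case-splitting on whether $u$ and $v$ receive the same color) gives
\[
\chi_{M(K_{2,m})}(t) - \chi_{M(T_m)}(t) = \bigl[(t-1)^m+(t-1)(t-2)^m\bigr]-(t-1)(t-2)^m = (t-1)^m.
\]
Finally, the unpaired flat $F_0$ contributes $(t-1)\cdot 1 = t-1$ to the $T_n$ recursion, since its restriction is a matroid of $n$ parallel pairs whose simplification is Boolean of rank~$n$.

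Assembling these pieces, letting $D(t):=P_{K_{2,n}}(t)-P_{T_n}(t)$, I expect the difference of the two recursions to simplify to
\[
t^{n+1}D(t^{-1}) \;=\; D(t) \;+\; \sum_{m=1}^{n}\binom{n}{m}(t-1)^m \;-\; (t-1) \;=\; D(t) + t^n - t.
\]
To finish, I invoke Theorem~\ref{thm:KL-exists}(2), which gives $\deg D\le\lfloor n/2\rfloor < n$ for $n\ge 2$: the left-hand side is then supported in degrees strictly greater than $\lfloor n/2\rfloor$, so matching coefficients forces the constant term of $D$ to vanish, the linear coefficient to equal $1$, and all remaining coefficients to vanish; thus $D(t)=t$, as desired.

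I expect the main obstacle to be the bookkeeping in the second step: correctly setting up the flat bijection and verifying that in each matched Case A pair the two restrictions really do yield matroids with equal Kazhdan--Lusztig polynomials after simplification. Once those details are in hand, the Case B chromatic identity and the final coefficient comparison are routine.
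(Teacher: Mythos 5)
Your argument is correct, and I checked the delicate points: in Case A with $F'\neq\emptyset$ at least one of the blocks of $u$, $v$ must absorb some $w_i$ (otherwise $F'=\emptyset$), so the contracted edge $e$ really is parallel to an existing edge and the two contractions have the same lattice of flats; the Case B chromatic computation and the count $\binom{n}{m}$ of such flats give $\sum_{m\ge 1}\binom{n}{m}(t-1)^m=t^n-1$, and the unpaired flat $\{e\}$ contributes $t-1$, so your functional equation $t^{n+1}D(t^{-1})=D(t)+t^n-t$ is exactly the identity the paper arrives at, and the concluding degree argument (valid precisely because $n\ge 2$ puts degree $1$ below the threshold $\lfloor n/2\rfloor$) is the same. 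Where you differ is in how that identity is produced: the paper quotes the closed recurrence for $P_{T_n}$ from the thagomizer paper, asserts the analogous recurrence for $P_{K_{2,n}}$ ``by the same methods,'' and subtracts, so the terms $\binom{n}{i}2^{n-i}(t-1)^{n-i}P_{T_i}(t)$ appear and then cancel; you instead compare the defining recursions directly via the injection $L(M(K_{2,n}))\hookrightarrow L(M(T_n))$, so the cancellation happens flat by flat and the lower thagomizer polynomials never enter. Your route is more self-contained (it does not rely on the external recurrences, only on the fact that the Kazhdan--Lusztig polynomial depends just on the lattice of flats, i.e.\ on the simplification), at the cost of the lattice bookkeeping you flagged; the paper's route is shorter on the page because the hard bookkeeping is outsourced to the cited lemma.
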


\begin{proof}
As part of the proof of Theorem \ref{thm:thag kl poly}, one derives the recurrence \cite[Lemma 3.1(1)]{thag}
\[t^{n+1}P_{T_n}(t^{-1}) - P_{T_n}(t) = (t-1)^{n+1} + \sum_{i=0}^{n-1} \binom{n}{i} 2^{n-i}(t-1)^{n-i}P_{T_i}(t).\]
One can show by the same methods that $P_{K_{2,n}}(t)$ satisfies a similar equation:
\begin{eqnarray*}
t^{n+1}P_{K_{2,n}}(t^{-1}) - P_{K_{2,n}}(t) &=& \sum_{i=0}^{n-1} \binom{n}{i} 2^{n-i} (t-1)^{n-i}P_{T_i}(t)\\ &+& \sum_{j=1}^{n} \binom{n}{j} \Big( (t-1)^{j} + (t-1)(t-2)^{j} \Big).
\end{eqnarray*}
By taking the difference, we find that
$$t^{n+1}P_{K_{2,n}}(t^{-1}) - t^{n+1} P_{T_n}(t^{-1}) 
= P_{K_{2,n}}(t) - P_{T_n}(t) +t^n - t.$$
Since $P_{K_{2,n}}(t)$ and $P_{T_n}(t)$ have degree strictly less than $(n+1)/2$, the theorem follows.
\end{proof}

\begin{remark}
If one contracts an edge of $K_{2,n}$ or an edge of $T_n$ (other than the distinguished edge), one obtains $T_{n-1}$.
Thus, Conjecture \ref{conjecture:interlacing} says that both $Q_{K_{2,n}}(t)$ and $Q_{T_n}(t)$ should interlace both $Q_{T_{n-1}}(t)$.
Computer calculations support this conjecture.
\end{remark}

The graph $T_n$ admits an action of $S_n$ that permutes the $n$ non-distinguished vertices.
Let $$\cT(t,u) := \sum_{n=0}^\infty \ch P_{T_n}^{S_n}(t)\, u^{n+1}
\quad\text{and}\quad
v(t,u) := \sum_{n=0}^\infty s[n]\Big[ (t-2) s[1]\Big] u^n,$$
where square brackets denote a plethysm of symmetric functions.
% Let $$s(u) := \sum_{n=0}^\infty s[n]u^n,\qquad w(t,u) := \frac{s(tu)}{s(u)},\qquad\text{and}\qquad v(t,u) := \sum_{n=0}^\infty s[n]\Big[ (t-2) s[1]\Big] u^n.$$
As in the uniform case, the defining recurrence for the equivariant
Kazhdan-Lusztig polynomials transforms into an elegant equation involving power series with symmetric function coefficients \cite[Proposition 4.7(2)]{thag}:
$$\cT(t^{-1},tu) = (t-1)us(u)v(t,u) + \frac{s(tu)^2}{s(u)^2}\cT(t,u).$$
A conjectural solution to this functional equation appears in
\cite[Conjecture 4.1]{thag}.
This functional equation immediately yields an analogous equation for (non-equivariant) exponential generating functions.
Let $$T(t,u) := \sum_{n=0}^\infty P_{T_n}(t)\, \frac{u^{n+1}}{n!} 
= \sum_{n=0}^\infty\sum_{k=0}^\infty c^{\thag}_{n,k}\,\frac{t^ku^{n+1}}{n!}.$$
Then
$$T(t^{-1},tu) = (tu-u)e^{tu-u} + e^{2(tu-u)} T(t,u).$$

%===========================================================================
\subsection{Braid matroids}
The {\bf braid matroid} $B_n$ is the graphical matroid associated with the complete graph on $n$ vertices,
or (equivalently) with the reflection arrangement associated with the Coxeter group $S_n$.
Surprisingly, we do not even have a conjecture for the Kazhdan-Lusztig polynomial (ordinary or $S_n$-equivariant) of $B_n$.
We now survey some partial results from \cite{kl} and \cite{ekl} and announce some partial results whose proofs will appear in a future paper.

\begin{remark}
For any element $e$ in the ground set of $B_n$, we have $B_n/e \cong B_{n-1}$.  Thus, 
Conjecture \ref{conjecture:interlacing} says that we should obtain an infinite sequence of interlacing
polynomials.  Computer calculations support this conjecture.
\end{remark}

As in the previous two cases, it is helpful to think about the generating functions.
As before, we have one version in which we take the Frobenius characteristic of the equivariant polynomials,
and one version in which we take the exponential generating function for the ordinary polynomials:
$$\cQ(t,u) := \sum_{n=1}^\infty \ch P_{B_n}^{S_n}(t)u^{n-1}\qquad\text{and}\qquad
Q(t,z) := \sum_{n=1}^\infty P_{B_n}(t)\frac{z^n}{n!}.$$
Let $$\cK(t,u) := t^{-1}\left(-1 + \prod_{k=1}^\infty(1+up_k)^{\frac{1}{k}\sum_{d|k}\mu(k/d)t^{d}}\right)
\qquad\text{and}\qquad
K(t,z) = t^{-1}\left(-1 +(1+z)^t\right).$$
Then we have the following functional equations \cite[Equation (7)]{ekl}:
$$\cQ(t^{-1},tu) = u^{-1} \cQ(t,1)\big[\cK(t,u)\big]
\qquad\text{and}\qquad
Q(t^{-1},tz) = t\,Q(t, K(t,z)).$$
The equivariant polynomials up to $n=9$ are given in \cite[Section 4.3]{ekl}.  The non-equivariant
polynomials up to $n=20$ appear in the appendix of \cite{kl},
where the following conjecture was stated.

\begin{conjecture}
The leading coefficient of $P_{B_{2k}}(t)$ is equal to $(2k-3)!!(2k-1)^{(k-2)}$, the number
of labelled triangular cacti on $(2k-1)$ nodes \cite[Sequence A034941]{oeis}.
\end{conjecture}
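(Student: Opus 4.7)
The plan is to translate the conjecture into a differential equation for a generating function and deduce it from the functional equation $Q(t^{-1},tz) = tQ(t, K(t,z))$ with $K(t,z) = ((1+z)^t-1)/t$. Let $L_n$ denote the leading coefficient of $P_{B_n}(t)$ and set $D(z) := \sum_{k\geq 1} L_{2k}\, z^{2k}/(2k)!$. The EGF of rooted labelled triangular cacti,
\[ R(z) = \sum_{k\geq 1} (2k-3)!!(2k-1)^{k-1}\, z^{2k-1}/(2k-1)!, \]
satisfies $R(z) = z\exp(R(z)^2/2)$ by the standard recursive decomposition of a rooted cactus into triangles at the root. Since $(2k-1)\cdot(2k-3)!!(2k-1)^{k-2} = (2k-3)!!(2k-1)^{k-1}$, the conjecture is equivalent to the identity $R(z) = zD''(z)$, hence to the ODE
\[ D''(z) = \exp\!\bigl(z^2 D''(z)^2/2\bigr). \]

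First I would extract leading-in-$t$ information from the functional equation by substituting $t=1/s$ and $z=sy$, yielding $sQ(s,y) = Q(1/s, K(1/s, sy))$. The left-hand side is a power series in $s$ whose $s^m$ coefficient is $\sum_{n\geq 2m} c_{n,m-1}\,y^n/n!$, beginning with $L_{2m}\,y^{2m}/(2m)!$. On the right-hand side, expand $K(1/s,sy) = s(e^y-1) - \tfrac{1}{2}s^2 e^y y^2 + O(s^3)$ and $P_{B_n}(1/s) = L_n s^{-\deg_n}(1+O(s))$. Each $n$ contributes at leading order $s^{n-\deg_n}$, which equals $s^{k+1}$ for $n=2k$ and $s^{k+2}$ for $n=2k+1$. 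Collecting by $s^m$ gives, at leading order for $m\geq 2$, the sum
\[ L_{2m-2}(e^y-1)^{2m-2}/(2m-2)! + L_{2m-3}(e^y-1)^{2m-3}/(2m-3)!, \]
plus corrections arising from smaller $n$ combined with sub-leading coefficients $c_{n,i}$ and with the higher-order terms of $K(1/s,sy)$.

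The main obstacle is reconciling the two expansions. Because the LHS $[s^m]$ coefficient begins at $y^{2m}$ while the RHS leading already contains nonzero $y$-terms at orders down to $y^{2m-3}$, delicate cancellations must occur, entangling the $L_n$ with the sub-leading $c_{n,i}$. Extracting $[y^{2m}]$ on both sides produces a recurrence for $L_{2m}$, and the crucial step is to show that this recurrence in fact depends only on the $L_n$ themselves and, after summation, collapses to the ODE above. If the direct bookkeeping proves intractable, an alternative is to work equivariantly: identify the top-degree $S_{2k}$-representation of $P^{S_{2k}}_{B_{2k}}(t)$ with the top intersection cohomology of the reciprocal plane of the braid arrangement, and compute its dimension via character-theoretic methods against the permutation representation on the set of labelled triangular cacti on $2k-1$ vertices.
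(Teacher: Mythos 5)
This statement is an open conjecture: the paper records it without proof (it was posed in the appendix of \cite{kl}), so there is no argument of the authors to compare yours against; the only question is whether your proposal settles it, and it does not. Your setup is sound as a \emph{reformulation}: the number of labelled triangular cacti on $2k-1$ nodes is indeed $(2k-3)!!(2k-1)^{k-2}$, the rooted EGF satisfies $R(z)=z\exp\!\bigl(R(z)^2/2\bigr)$, and the conjecture is equivalent to $zD''(z)=R(z)$, i.e.\ to the ODE $D''=\exp\!\bigl(z^2(D'')^2/2\bigr)$. The gap is exactly the step you yourself flag as ``crucial'': extracting $[s^m][y^{2m}]$ from $sQ(s,y)=Q(1/s,K(1/s,sy))$ does not yield a recurrence in the leading coefficients $L_n$ alone. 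The substitution $z\mapsto K(1/s,sy)$ spreads each $P_{B_n}(1/s)$ over infinitely many powers of $s$, so the coefficient of $s^my^{2m}$ on the right receives contributions from the sub-leading coefficients $c_{n,i}$ of every smaller braid matroid, weighted by the higher-order $s$-terms of $K$; you offer no mechanism by which these contributions cancel or are themselves expressible through the $L_n$, and nothing in the proposal connects the resulting open-ended recurrence to the cactus equation $R=z\exp(R^2/2)$. Asserting that the bookkeeping ``collapses to the ODE'' is essentially a restatement of the conjecture, and the paper's own evidence (no conjecture is known even for the full polynomials, and Proposition \ref{egf} shows the coefficientwise generating functions already have nontrivial structure) gives no reason to expect a closed recurrence among top coefficients. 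A minor point: calling $L_{2k}$ the coefficient of $t^{k-1}$ presupposes non-degeneracy of $B_{2k}$, which is itself conjectural (Conjecture \ref{degree}); this is harmless if you simply define $L_{2k}$ as the coefficient of the top allowed degree, but it should be said.

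The fallback equivariant route is likewise not an argument. Braid matroids are equivariantly realizable, so by Remark \ref{eq-re} the top coefficient of $P^{S_{2k}}_{B_{2k}}(t)$ is indeed the top intersection cohomology of the reciprocal plane as an $S_{2k}$-representation; but ``compute its dimension via character-theoretic methods against the permutation representation on labelled triangular cacti'' names a hoped-for answer rather than a computation --- no map, character identity, or spectral-sequence mechanism is proposed, and identifying these representations is precisely the open problem the paper describes. In short: your reduction to the ODE $D''=\exp(z^2(D'')^2/2)$ is a correct and possibly useful equivalent form of the conjecture, but the proposal supplies no proof of the one hard step, so the conjecture remains open.
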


Let $G_i(z)$ be the coefficient of $t^i$ in $Q(t,z)$, which is the exponential generating function for the $i^\text{th}$ coefficient
of the Kazhdan-Lusztig polynomial of the braid matroid.  
Let $H_i(z)$ be the ordinary (as opposed to exponential) generating function for the $i^\text{th}$ coefficient
of the Kazhdan-Lusztig polynomial of the braid matroid.
The following result is new.

\begin{proposition}\label{egf}
There exist polynomials $p_{ij}(z)$ % for $0\leq j\leq 2i$ 
such that $G_i(z) = \sum_{j=0}^{2i} p_{ij}(z)e^{jz}$.
Equivalently, $H_i(z)$ is a rational function 
whose poles are contained in the set 
$\{j^{-1}\mid 1\leq j\leq 2i\}$.
%$\{1, \frac{1}{2}, \frac{1}{3}, \cdots, \frac{1}{2i}\}.$
\end{proposition}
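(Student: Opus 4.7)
The plan is to prove the EGF form by strong induction on $i$, using the functional equation $Q(t^{-1}, tz) = t\,Q(t, K(t,z))$; the OGF statement then follows from the standard correspondence, namely that $z^d e^{jz}$ transforms to a rational function whose unique pole is at $z = 1/j$ of order at most $d+1$ (with $j = 0$ contributing only to the polynomial part, hence no pole). The base case is computed directly: $G_0(z) = \sum_{n \geq 1} z^n/n! = e^z - 1$. Suppose the claim holds for all indices less than $i$.

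For the inductive step (with $i \geq 1$), extract the coefficient of $t^{i+1}$ from each side of the functional equation. On the left, the bound $\deg P_{B_n} < (n-1)/2$ makes
$$[t^{i+1}]\, Q(t^{-1}, tz) \;=\; \sum_{n} \frac{[t^{n-i-1}]\, P_{B_n}(t)}{n!}\, z^n$$
a finite sum, namely a polynomial in $z$ of degree at most $2i$. On the right, $[t^{i+1}]\, t\,Q(t, K(t,z)) = [t^i]\, Q(t, K(t,z)) = G_i(\log(1+z)) + \sum_{j=0}^{i-1} [t^{i-j}]\, G_j(K(t,z))$, since $K(0,z) = \log(1+z)$ isolates the principal term.

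To finish, substitute $z = e^w - 1$ (so $w = \log(1+z)$) throughout. The LHS polynomial becomes a polynomial of degree $\leq 2i$ in $e^w$, contributing $\sum_{k=0}^{2i} \gamma_k\, e^{kw}$ for some constants $\gamma_k$. For the inductive remainder, write $K(t,z) = w + \epsilon$ with $\epsilon = \sum_{k \geq 2} t^{k-1} w^k/k! \in t\,\mathbb{Q}[w][[t]]$, and use the inductive form $G_j(u) = \sum_k p_{j,k}(u)\, e^{ku}$ to expand
$$G_j(K(t,z)) \;=\; \sum_k p_{j,k}(w + \epsilon)\, e^{kw}\, e^{k\epsilon}.$$
Since $\epsilon \in t\,\mathbb{Q}[w][[t]]$, both $p_{j,k}(w+\epsilon)$ and $e^{k\epsilon}$ are formal power series in $t$ with polynomial-in-$w$ coefficients, so every $[t^{i-j}]\, G_j(K(t,z))$ is a sum of terms $r_{j,k}(w)\, e^{kw}$ with $r_{j,k}$ polynomial and $k \leq 2j \leq 2(i-1)$. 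Combining with the LHS contribution yields the desired expression $G_i(w) = \sum_{k=0}^{2i} \tilde p_{i,k}(w)\, e^{kw}$.

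The main obstacle is the formal-series bookkeeping in the Taylor-style substitution: one must confirm that $p_{j,k}(w+\epsilon)\, e^{k\epsilon}$ indeed has polynomial-in-$w$ coefficients when expanded in $t$, and track that the maximum exponent $k = 2i$ on the right is never exceeded (the inductive terms give only $k \leq 2(i-1)$, so the tight bound $2i$ comes solely from the LHS polynomial). The case $i = 1$ warrants a separate sanity check, since then the inductive hypothesis only involves $G_0 = -1 + e^w$ (which marginally uses $k = 1$); direct computation gives $G_1(z) = \tfrac{1}{2}e^{2z} - (1 + \tfrac{z^2}{2})e^z + \tfrac{1}{2}$, consistent with the bound $k \leq 2$.
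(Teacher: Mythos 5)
Your proof is correct, and it takes a genuinely different route from the one the paper has in mind: the paper does not actually prove Proposition \ref{egf} in this survey, but the remark following it announces that the intended proof shows the equivariant Kazhdan-Lusztig polynomials of braid matroids carry the structure of a finitely generated $\operatorname{FS^{op}}$-module in the sense of Sam and Snowden, and deduces the shape of $G_i$ and $H_i$ from that finiteness. You instead run a direct induction on $i$ through the non-equivariant functional equation $Q(t^{-1},tz)=t\,Q(t,K(t,z))$: the degree bound $\deg P_{B_n}<\tfrac{n-1}{2}$ forces $[t^{i+1}]Q(t^{-1},tz)$ to be a polynomial in $z$ of degree at most $2i$ (only $i+1\le n\le 2i$ contribute), the substitution $z=e^w-1$ and the decomposition $K(t,z)=w+\epsilon$ with $\epsilon\in t\,\mathbb{Q}[w][[t]]$ are legitimate formal-power-series manipulations (each $t$-coefficient of $p_{jk}(w+\epsilon)e^{k\epsilon}$ is indeed a polynomial in $w$, since $\epsilon$ has positive $t$-valuation with monomial coefficients), and the exponent $2i$ really does come only from the left-hand side, the inductive terms contributing exponents at most $2i-1$; your explicit $G_1(z)=\tfrac12 e^{2z}-\bigl(1+\tfrac{z^2}{2}\bigr)e^z+\tfrac12$ checks out against $H_1(z)=z^4/\bigl((1-z)^3(1-2z)\bigr)$ and against the known formula for linear coefficients. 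One bookkeeping caveat, which you already flag: the statement read literally at $i=0$ fails ($G_0=e^z-1$, $H_0=z/(1-z)$), so the induction should carry the bound $\max(2i,1)$, or start at $i=1$ with $G_0$ treated separately; in particular the line ``$k\le 2j\le 2(i-1)$'' needs this adjustment when $j=0$. As for what each approach buys: yours is elementary and self-contained given the functional equation from \cite{ekl}, and in principle computes the polynomials $p_{ij}$ recursively; the $\operatorname{FS^{op}}$-module approach operates at the equivariant level and yields structural finiteness information about the representations $\ch P^{S_n}_{B_n}(t)$ going beyond the location of the poles, at the cost of invoking the Sam-Snowden machinery.
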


\begin{example}
We have
$$H_1(z) = \frac{z^4}{(1-z)^3(1-2z)}\qquad\text{and}\qquad H_2(z) = \frac{15z^6-50z^7 + 40z^8 + 4z^9}{(1-z)^5(1-2z)^3(1-4z)}.$$ 
\end{example}

\begin{remark}
The proof of Proposition \ref{egf} involves showing that the equivariant Kazhdan-Lusztig polynomials of
braid matroids admit the structure of a {\bf finitely generated \boldmath$\operatorname{FS^{op}}$-module}
in the sense of Sam and Snowden \cite{SamSnowden}.  In contrast with FI-modules, which have been studied
extensively, relatively little is known about the behavior of $\operatorname{FS^{op}}$-modules.
\end{remark}

\bibliography{fpsac}
\bibliographystyle{amsalpha}

\end{document}